\journal{Physica D}
\newcommand{\Z}{\mathbb Z}
\newcommand{\R}{\mathbb{R}}
\newcommand{\N}{\mathbb{N}}
\newcommand{\C}{\mathbb{C}}
\newcommand{\Ncal}{\mathcal{N}}
\newcommand{\Ptest}{P_{\mathrm{test}}}
\newcommand{\Fourier}[1]{\widehat{#1}}
\DeclareMathOperator*{\argmin}{arg\,min}
\newcommand{\lossfull}{\ell}
\newcommand\Ic{\mathcal I}
\newcommand\Jc{\mathcal J}
\newcommand\Kc{\mathcal K}
\newcommand\Ac{\mathcal A}
\newcommand\Gc{\mathcal G}
\newcommand\Oc{\mathcal O}
\newcommand\Yc{\mathcal Y}
\newcommand\Xc{\mathcal X}
\newcommand\Sc{\mathcal S}
\newcommand\Dc{\mathcal D}
\newcommand\Koopman{\Kc}
\newtheorem{theorem}{Theorem}[section]
\newtheorem{lemma}{Lemma}%
\newtheorem{proposition}{Proposition}%
\newtheorem{remark}{Remark}%
\newtheorem{example}{Example}[section]%
\begin{document}

\begin{frontmatter}

\title{Group-Convolutional Extended Dynamic Mode Decomposition}

\author[1,6]{Hans Harder}
\author[2]{Feliks Nüske}
\author[3]{Friedrich M. Philipp}
\author[4]{Manuel Schaller}
\author[3]{Karl Worthmann}
\author[5,6]{Sebastian Peitz}

\affiliation[1]{
    organization={Department of Computer Science, Paderborn University},
    city={Paderborn}, 
    country={Germany}
}
\affiliation[2]{
    organization={Max Planck Institute for Dynamics of Complex Technical Systems},
    city={Magdeburg}, 
    country={Germany}
}
\affiliation[3]{
    organization={Institute of Mathematics, Technische Universität Ilmenau},
    city={Ilmenau}, 
    country={Germany}
}
\affiliation[4]{
    organization={Faculty of Mathematics, Chemnitz University of Technology}, 
    city={Chemnitz}, 
    country={Germany}
}
\affiliation[5]{
    organization={Department of Computer Science, TU Dortmund}, 
    city={Dortmund}, 
    country={Germany}
}
\affiliation[6]{
    organization={Lamarr Institute for Machine Learning and Artificial Intelligence},
    city={Dortmund}, 
    country={Germany}
}

\begin{abstract}
This paper explores the integration of symmetries into the Koopman-operator framework for the analysis and efficient learning of equivariant dynamical systems using a group-convolutio\-nal approach. Approximating the Koopman operator by finite-dimensional surrogates, e.g., via extended dynamic mode decomposition (EDMD), is challenging for high-dimensional systems due to computational constraints. 
To tackle this problem with a particular focus on EDMD, we demonstrate---under suitable equivarance assumptions on the system and the observables---that the optimal EDMD matrix is %
equivariant. That is, its action on states can be described by group convolutions and the generalized Fourier transform.
We show that this %
structural property
has many advantages for equivariant systems, in particular, that it allows for data-efficient learning, fast predictions and fast eigenfunction approximations.
We conduct numerical experiments on the Kuramoto--Sivashinsky equation and a $\lambda$-$\omega$ spiraling wave system, both nonlinear partial differential equations, providing evidence of the effectiveness of this approach, and highlighting its potential for broader applications in dynamical systems analysis.
\end{abstract}

\end{frontmatter}

\section{Introduction}

Dynamical systems are fundamental to the modeling of natural phenomena and engineering systems, which are often modeled by differential equations and describe, among others, mechanical systems, the diffusion of heat in a material, the propagation of sound waves in air, or the flow of fluids in pipes and oceans. 
These equations are indispensable in fields ranging from physics and engineering to finance and biology, making them key problem-solving tools.

Linear systems are particularly appealing due to their well-understood properties and the powerful mathematical techniques available for their analysis, see, e.g., \cite{trefethen2022numerical}. For example, one can characterize the behavior of these systems by analyzing their eigenvalues and eigenvectors or derive non-parametric representations using input-output data, see, e.g., \cite{willems2005note,van2020willems}. 
In case of nonlinear dynamics, the Koopman operator, see \cite{Koo31,RMB+09}, provides a transformation into a linear framework by focusing on the evolution of observables---functions of the system state---rather than the state itself. This linearization can reveal underlying structures and long-term behavior. 

Still, there are only few systems for which the Koopman operator has a finite-dimensional representation, requiring an approximation by means of, e.g., extended dynamic mode decomposition (EDMD), see, e.g., the survey article~\cite{brunton2022modern}, the collection~\cite{mauroy2020koopman} and the references therein.

EDMD constructs a matrix that can contain too many entries for efficient computations.
This can happen when the state dimension is already very large, as is usually the case for discretizations of partial differential equations (PDEs). To address this issue, one can often exploit system-inherent properties like, e.g., symmetries. 
Accordingly, symmetry-exploiting methods have become more popular in recent years, even beyond the scope of dynamical systems: %
For example, convolutional neural networks and more generally geometric deep-learning methods \cite{BBCV21} are designed to respect task-inherent equivariances. 
Constructing learning algorithms that respect these equivariances can increase performance and significantly alleviate requirements on training data, see, e.g., \cite{MBB+15,CW16,BBL+17,AGS21,BBCV21,HRV+24}; \cite{dierkes2023hamiltonian}.
The presence of symmetry can lead to a decomposition of the EDMD matrix into simpler, more tractable components. It appears that hitherto proposed symmetry-based methods can be split into two categories: Some methods enforce block-wise diagonal structures in the EDMD matrix, see \cite{sinha2020koopman,PHN+24}, and some methods enforce diagonal structures in Fourier space. Our contribution falls into the latter category and is related to the works of \cite{salovaKoopmanOperatorIts2019,baddooPhysicsinformedDynamicMode2023,hochrainerApproximationTranslationInvariant2024b,nathan2018applied}. Here, the work of \cite{salovaKoopmanOperatorIts2019} provides a more general group-theoretic treatment and, among other things, discusses the importance of choosing appropriate symmetry-respecting observables.
Another focus is on efficient methods for translation-equivariant dynamical systems, see \cite{baddooPhysicsinformedDynamicMode2023,hochrainerApproximationTranslationInvariant2024b} (the work of \cite{baddooPhysicsinformedDynamicMode2023} also considers different aspects, but their proposed method for translational equivariance is most relevant for our purposes). 
In particular, both \cite{baddooPhysicsinformedDynamicMode2023} and \cite{hochrainerApproximationTranslationInvariant2024b} discuss how the EDMD matrix can be forced to be (block-)diagonal in Fourier space for PDEs. Finally, \cite{nathan2018applied} points out the relationship between linearizing transformations such as the Cole-Hopf transform on the one hand and appropriate choices of observables on the other, in conjunction with the Fourier transform.

\paragraph{Contributions and organization} 

Our approach extends and generalizes these developments into multiple directions: Firstly, we assume a more general group-theoretic setting.
In this regard, we show how one can enforce an equivariant structure in the EDMD matrix and demonstrate that this approach is optimal under suitable %
assumptions on the system dynamics, the observables and the sampling distribution. 
Crucially, this approach does not construct the EDMD matrix explicitly, but represents it implicitly by means of a convolution kernel. Making use of the generalized Fourier transform, we can then compute its eigenvalues and eigenvectors, and gain a significant computational advantage.

To this end, we provide an introduction to the Koopman operator and its approximation via EDMD (\Cref{sec:preliminaries}), followed by a discussion of basic group-theoretic aspects (\Cref{sec:groups,sec:convolutions-and-fourier-transform}). \Cref{sec:equivariance-edmd} contains the main contributions of this work: %
On the one hand, assuming an appropriate choice of the sampling distribution and the observables, we show that equivariance is inherited by the EDMD matrix
and that it can be represented by a group convolution (\Cref{thr:convolution}). %
On the other hand, we show how one can conveniently learn the corresponding convolution kernel in Fourier space using the generalized Fourier transform. And finally, we conduct experiments by means of %
the two-dimensional Kuramoto--Sivashinsky equation %
to demonstrate the advantages of our proposed %
group-convolutional approach in the low-data regime (see \Cref{sec:experiments}).

\paragraph{Notation}
We will frequently require sets of the form $\C^{\Ic}$, $\C^{\Ic \times \Jc}$, $\C^{\Ic \times \Jc \times \Dc}$, etc., where $\Ic$, $\Jc$ and $\Dc$ are finite sets. The intention is to think of the elements in $\C^{\Ic}$ as vectors instead of functions, similarly we think of elements in $\C^{\Ic \times \Jc}$ as matrices. The individual entries of $A \in \C^{\Ic \times \Jc}$ are denoted by $A_{i,j}$ for $(i,j) \in \Ic \times \Jc$.
Vector-vector or matrix-vector products are defined accordingly. The main advantage is that this simplifies indexing: For example, if $\Jc = \Jc_1 \times \Jc_2$, we can easily define the matrix-vector product of $A \in \C^{\Ic \times \Jc}$ with $x \in \C^{\Jc}$ by $A x \in \C^{\Ic}$, $(A x)_i = \sum_{(j_1,j_2) \in \Jc} A_{i,(j_1,j_2)} x_{j_1,j_2}$. Throughout this text, we denote random variables in boldface, e.g., $\bm y, \bm u, \bm u^+$. Given a probability space $(\Yc,\Sigma,P)$ with $\bm y \sim P$ and some measurable function $f : \Yc \rightarrow \R$, the expected value is defined as $\mathbb E[f(\bm y)] \coloneqq \int f(y) P(dy)$.

\paragraph{Code and resources} The code for our experiments in \Cref{sec:experiments} and some supplementary material for \Cref{ex:spring-system} can be found on GitHub, \begin{center}
   \url{https://github.com/graps1/convolutional-edmd}. 
\end{center} 
For the case of \Cref{ex:spring-system}, our codebase contains an implementation that models the system dynamics using a group-convolutional approach. For the experiments on the Kuramoto-Sivashinsky equation in \Cref{sec:experiments}, we provide a readable implementation that mimics the mathematical descriptions used in this paper.

\section{Preliminaries: The Koopman operator, EDMD, and symmetries}\label{sec:preliminaries}

The Koopman operator evolves functions (called observables) forward in time by composing them with a given dynamical system. 
For such a system $\Phi : \Yc \rightarrow \Yc$ on some state space~$\Yc$, one defines the Koopman operator $\Koopman$ on observables of the form $f : \Yc \rightarrow \C$ via the composition %
$$
    \Koopman f \coloneqq f \circ \Phi. 
$$
If $f$ is a vector-valued function $f: \Yc \rightarrow \C^m$, then $\Koopman f$ is defined analogously. The Koopman operator is linear by definition, even though the underlying system dynamics may be nonlinear, but it is infinite-dimensional in general. By studying properties of the Koopman operator, such as its eigenvalues and eigenfunctions, one can obtain insights into the underlying system dynamics.
However, due to the infinite number of dimensions, one has to resort to finite-dimensional approximations using extended dynamic mode decomposition (EDMD), see, e.g., \cite{WKR15}.
In EDMD, one projects $\Koopman$ onto a set of \emph{base observables}, whose evolution is described by a finite linear system. This system is found by drawing states from some distribution and solving a certain least-squares problem. We need the following additional assumptions:
\begin{itemize}[noitemsep]
    \item In order to draw samples from $\Yc$, we assume that it is part of a probability space $(\Yc,\Sigma,P)$, where $\Sigma \subseteq 2^\Yc$ is a $\sigma$-algebra of events and $P : \Sigma \rightarrow [0,1]$ a probability measure. We use  $\bm y$ to denote a random variable with distribution $P$ ($\bm y \sim P$).
    \item Additionally, if the Koopman operator is applied to square-integrable observables, it must again produce square-integrable observables. In other words, $f \in L^2_P(\Yc)$ implies $\Koopman f \in L^2_P(\Yc)$ so that the EDMD loss in \eqref{eq:edmd-loss} below has a well-defined expectation.
    \item The dictionary of base observables is given by a set of predefined $\psi_1,\dots,\psi_m \in L^2_P(\Yc)$, or, more simply, $\psi \in L^2_P(\Yc)^m$. 
\end{itemize}
Extended dynamic mode decomposition now tries to find the matrix $K$ that solves the optimization problem 
\begin{align}
    \min_{K \in \C^{m \times m}} \mathbb E[ \lVert K \psi(\bm y) - \Koopman \psi( \bm y) \rVert^2 ].
    \label{eq:edmd-loss}
\end{align}
The solution can be seen as a Galerkin projection of the Koopman operator, see \cite{WKR15}, and the \emph{finite-data approximation} to it is conventionally called the \emph{EDMD matrix}. However, since we are dealing with finitely many samples only at the very end, we will call $K$ the \emph{EDMD matrix} as well, as it is the central object that EDMD tries to learn.
Intuitively, $K$ defines a linear map in $\C^m$ that mimics the Koopman operator's action on the observables $\psi_1,\dots,\psi_m$.
The corresponding EDMD \emph{operator} %
is defined on the span of the base observables, $f = c^T \psi \in \mathrm{span}\, \{\psi_1,\dots,\psi_m\}$, with $c \in \C^m$, via the mapping $$
    (\Koopman_{\text{EDMD}} f)(y) \coloneqq c^T K \psi(y).
$$ 
Under some additional assumptions, it can be shown that $\Koopman_{\text{EDMD}}$ converges to the true Koopman operator when $m \rightarrow \infty$, see \cite{KM18b} and \cite{klus2016convergence} and also the very general analysis of transfer-operator learning provided in \cite{mollenhauer2022kernel,kostic2024sharp,kostic2024consistent}. Besides the projection error, one may deviate from the true EDMD matrix if the expectation in \eqref{eq:edmd-loss} is approximated using finitely many samples. The resulting finite-data error term is well studied and can be bounded, see, e.g., {\cite{Mez22,ZZ22,NPP+23}, \cite{PSW+24} for kernel EDMD, in case of either i.i.d.\ or ergodic sampling and the recent preprint \cite{llamazares2024data} with particular focus on noisy data.
Moreover, uniform error bounds on the approximation error were recently derived in~\cite{KPS+24} based on Wendland radial basis functions and in \cite{yadav2025approximation} using Bernstein polynomials.

Most Koopman-theoretic investigations have hitherto considered ordinary or stochastic differential equations. More general settings can be found in \cite{baddooPhysicsinformedDynamicMode2023, hochrainerApproximationTranslationInvariant2024b, PSB+24,nathan2018applied}, which cover PDEs as well.
Due to the usually very high state dimension in the PDE case stemming from spatial discretization, it is prohibitive to learn a full-scale EDMD matrix and advantageous to apply more efficient approaches. %
Previous works such as \cite{baddooPhysicsinformedDynamicMode2023,hochrainerApproximationTranslationInvariant2024b} already indicated that one may exploit the Fourier transform using translational symmetries.
In general, symmetries occur in a large number of PDE systems implying that the dynamics are equivariant under certain group actions, which depend on the system dynamics as well as on the domain and the boundary conditions.
One example is %
the Kuramoto--Sivashinsky equation, which can be written in coordinate-free form as
\begin{equation}
    \dot y = \Ncal(y) = -\Delta y - \Delta^2 y - \textstyle\frac 1 2 \lVert \nabla y \rVert^2, \label{eq:ks}
\end{equation}
where $y$ is an underlying time-dependent scalar field, $\Delta$ the Laplace operator, $\Delta^2$ the biharmonic operator and $\lVert \nabla y \rVert^2$ the squared length of $y$'s time-dependent gradient field. The resulting flow $\Phi$ can be obtained by solving the PDE for a fixed time step, say $\Phi(y(t)) = y(t+\delta)$ for some $\delta >0$.
This particular PDE is equivariant under any transformation of the Euclidean group. %
This means that shifting, rotating or reflecting the solution $y$ and then applying $\Ncal$ yields the same result as shifting, rotating or reflecting $\Ncal(y)$, see, e.g., \cite{HRV+24}. In particular, shifted, rotated or reflected solutions of the PDE are again solutions. %
Here, we utilize this group-theoretic perspective %
to leverage the generalized Fourier transform for efficient learning.  %

\subsection{Groups and group actions} \label{sec:groups}

The following discussion of group-theoretic aspects is based on \cite{ahlanderApplicationsGeneralizedFourier2005,ahlander_eigenvalues_2006} and \cite{terras1999fourier}.

A \emph{group} is a tuple $\Gc = (G,\circ)$ consisting of a set $G$ and an associative operation $\circ: G \times G \rightarrow G, (g,h) \mapsto gh$ that contains an identity $e$ and inverses. For brevity, we will also use $\Gc$ to refer to the set, i.e., $\Gc = G$.
The group is said to be \emph{abelian} if the composition operation is commutative. Throughout this text, $\Gc$ is assumed to be finite.

\begin{example}
    The symmetry group of the square describes rotations and reflections of a square across its center. It consists of an identity element, rotations of 90, 180 and 270 degrees, and reflections across the diagonal as well as horizontal and vertical axes. The group operation chains these elements, in the sense that two 90 degree rotations amount to one 180 degree rotation.
\end{example}

A \emph{left group action} $\cdot: \Gc \times \Yc \rightarrow \Yc$ of $\Gc$ on a set $\Yc$ is defined by $(g,y) \mapsto g \cdot y$ such that it is compatible with the group operation, i.e., $e \cdot y = y$ and $(g h) \cdot y = g \cdot (h \cdot y)$ for all $g, h \in \Gc$ and $y \in \Yc$. An analogous definition holds for \emph{right} group actions, but in the sense that $(g,y) \mapsto y \cdot g$ with $y \cdot e = y$ and $y \cdot (g h) = (y \cdot g) \cdot h$.
We say that a left resp.\ right group action is \emph{free} if $g \cdot y = y$ resp.\  $y \cdot g = y$ already implies $g = e$ (that is, only the identity element can map elements in $\Yc$ to themselves). Finally, we call a set $\Gc \cdot y = \{ g \cdot y : g \in \Gc \}$ resp.\  $y \cdot \Gc = \{ y \cdot g : g \in \Gc \}$ an \emph{orbit} of a left resp.\ right group action. Notably, the set of orbits forms a partition of $\Yc$. Intuitively, an orbit collects a (maximal) subset of elements in $\Yc$ that can be transformed into each other by application of group operations.

\begin{example}[Orbits]
    \label{ex:orbits}
    Suppose $\Gc = (\Z_2,+)$ is the cyclic group of order two. For $\Yc = \Z_2 \times \Z_2$, the mapping $(y_1,y_2)\cdot g = (y_1 + g, y_2 + g)$ defines a free right group action. This results in the orbits $\{ (0,0), (1,1) \} = (0,0) \cdot \Gc$ and $\{ (1,0), (0,1) \} = (0,1) \cdot \Gc$. An analogous left action can be defined.
\end{example}

If $\Gc$ acts from the left on two sets $\Xc$ and $\Yc$, we say that a function $f : \Xc \rightarrow \Yc$ is \emph{left invariant} if $f(x) = f(g \cdot x)$, and we say that it is \emph{left equivariant} if $g \cdot f(x) = f(g \cdot x)$ holds for all $g \in \Gc$ and $x \in \Xc$. (Note that the spaces $\Xc$ and $\Yc$ might be unrelated, and that the group action might behave differently in both spaces.) If $\Gc$ acts on $\Xc$ and $\Yc$ from the right, the definition is analogous.

\begin{example}[Actions on PDEs]
    Consider the group $(\{+1,-1\}, \cdot)$, which acts on $\R$ via multiplication. We can define a group action on the set of twice-differentiable functions  $y \in C^2(\R)$ by setting $(g \cdot y)(x) = y(- g \cdot x)$, and get $\partial_x (g \cdot y) = g \cdot (-\partial_x y)$. Hence, equivariance for $\Ncal(y) = \partial_{xx}^2 y$ or $\Ncal(y) = (\partial_x y)^2$ holds, but not for $\Ncal(y) = \partial_x y$.
\end{example}

\begin{remark}[Orbits and free actions]\label{rem:orbits-and-free-actions}
    If $\Gc$ acts freely from the right on some finite set $\Oc$, then the set of orbits defines an equivalence relation on~$\Oc$. %
    If $\Sc \subseteq \Oc$ is a complete collection of orbit representatives (that is, $\Sc$ contains exactly one element from every orbit and nothing else), then we have a one-to-one correspondence between $\Oc$ and $\Sc \times \Gc$ via the bijection $\Sc \times \Gc \rightarrow \Oc, (s, g) \mapsto s \cdot g$. This fact allows us to switch back-and-forth between vectors $y \in \C^\Oc$ and vector-valued functions on $\Gc$, namely $y : \Gc \rightarrow \C^\Sc$, via the mapping $y_s(g) \coloneqq y_{s \cdot g}$. We note that this also holds for left group actions. %
\end{remark}

\begin{example}
    Consider $\Oc \coloneqq \Yc = \Z_2 \times \Z_2$ and $\Gc = (\Z_2,+)$ from \Cref{ex:orbits}. A valid set of orbit representatives is $\Sc = \{ (0,0), (1,0) \}$, and we indeed have a one-to-one correspondence between $\Oc$ and $\Sc \times \Gc$ via the identification
    $$
        (0,0) \mapsto ((0,0), 0), \quad (1,1) \mapsto ((0,0), 1),
        \quad (1,0) \mapsto ((1,0), 0), \quad (0,1) \mapsto ((1,0), 1).
    $$
    Intuitively, every $o \in \Oc$ can be uniquely represented as $o = s \cdot g$ for an orbit representative $s \in \Sc$ and a group element $g \in \Gc$, and hence we can simply identify $o$ with $(s, g)$.
\end{example}

Throughout this paper, we assume that $\Gc$ defines a free right-group action on a finite set~$\Oc$. By \Cref{rem:orbits-and-free-actions}, there is a set of orbit representatives $\Sc \subseteq \Oc$ such that $\Sc \times \Gc$ and $\Oc$ are in one-to-one correspondence, and that a bijection is defined by $(s,g) \mapsto s \cdot g$.

\subsection{Equivariant matrices}\label{sec:equivariant-matrices}

We call a matrix $K \in \C^{\Oc \times \Oc}$ \emph{equivariant} if it satisfies $K_{o, o'} = K_{o \cdot g, o' \cdot g}$ for all $g \in \Gc$ and all $o,o'\in \Oc$. %
Matrices of this type are used to model equivariant structures in dynamical systems.
As an important property, matrix-vector multiplications involving %
equivariant matrices can be efficiently evaluated %
by \emph{group convolutions}. Here, in view of symmetry-exploiting learning, we utilize the inherent connection of equivariant matrices with convolution kernels in two ways: First, we reduce the storage requirements, as the convolution kernel stores fewer elements than the corresponding matrix, and second, we achieve a significant speed-up in computations, as %
propagating the system or computing its eigenvalues may be performed by a
generalized version of the Fourier transform. %

To understand the connection between equivariant matrices and equivariant linear functions, \Cref{thr:equivariant-matrices} shows that both are equivalent for group actions that permute indices:

\begin{lemma}
    \label{thr:equivariant-matrices}
    Suppose $\Gc$ acts on $\Oc$ from the right, $K \in \C^{\Oc \times \Oc}$ is a matrix, and a left group action on $\C^\Oc$ is defined via $(g \cdot y)_o \coloneqq y_{o \cdot g^{-1}}$ for all $y \in \C^\Oc$ and $o \in \Oc$. Then the following statements are equivalent:  %
    \begin{enumerate}[noitemsep]
        \item[(i)] $K$ is an equivariant matrix.
        \item[(ii)] The function $f(y) = K y$ is left equivariant.
    \end{enumerate}
\end{lemma}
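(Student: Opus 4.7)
The plan is to prove both implications by direct index manipulation, using the definition of the left action $(g \cdot y)_o = y_{o \cdot g^{-1}}$ together with the fact that $\Gc$ permutes $\Oc$ from the right.

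For the direction (i) $\Rightarrow$ (ii), I would fix $g \in \Gc$ and $y \in \C^\Oc$, and compute $(K(g \cdot y))_o$ entry-wise. Unfolding the matrix-vector product gives
\[
    (K(g \cdot y))_o \;=\; \sum_{o' \in \Oc} K_{o,o'}\, y_{o' \cdot g^{-1}},
\]
and after reindexing via the bijection $o' \mapsto o' \cdot g$ (which is a bijection of $\Oc$ since the action is free, but freeness is not even needed here — just that right multiplication by $g$ is a permutation of $\Oc$), this becomes $\sum_{o''} K_{o, o'' \cdot g}\, y_{o''}$. Applying the equivariance hypothesis with the group element $g^{-1}$ to $K_{o, o'' \cdot g}$ replaces it by $K_{o \cdot g^{-1}, o''}$, which immediately yields $(Ky)_{o \cdot g^{-1}} = (g \cdot Ky)_o$, as required.

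For the converse (ii) $\Rightarrow$ (i), I would test the equivariance of $f(y) = Ky$ on the standard basis vectors $e_{o'} \in \C^\Oc$ defined by $(e_{o'})_{o''} = \delta_{o', o''}$. A short computation using the definition of the left action shows that $g \cdot e_{o'} = e_{o' \cdot g}$, since $(g \cdot e_{o'})_{o''} = \delta_{o', o'' \cdot g^{-1}} = \delta_{o' \cdot g, o''}$. Plugging this into the equivariance relation $K(g \cdot e_{o'}) = g \cdot (Ke_{o'})$ and reading off the $o$-th entry of both sides gives $K_{o, o' \cdot g} = K_{o \cdot g^{-1}, o'}$. Replacing $o$ by $o \cdot g$ then yields $K_{o \cdot g, o' \cdot g} = K_{o, o'}$ for all $o, o' \in \Oc$ and all $g \in \Gc$, which is precisely the equivariance of $K$.

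There is no real obstacle here — both directions reduce to bookkeeping with group actions on indices, and the only subtle point is that the inverse appears on the right index in the left action, so one has to be careful to apply the equivariance hypothesis with $g^{-1}$ rather than $g$ in the forward direction, and to make the substitution $o \mapsto o \cdot g$ at the end of the converse direction. Because both implications are essentially symmetric manipulations, it is natural to present them as a short two-paragraph proof rather than invoke any further machinery.
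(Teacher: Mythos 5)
Your proposal is correct and follows essentially the same route as the paper: a direct entry-wise computation (reindexing by the permutation $o' \mapsto o' \cdot g$ and applying matrix equivariance) for (i) $\Rightarrow$ (ii), and testing equivariance of $y \mapsto Ky$ on the unit vectors $e_{o'}$ with the observation $g \cdot e_{o'} = e_{o' \cdot g}$ for (ii) $\Rightarrow$ (i). The only differences are cosmetic (the order in which you reindex versus invoke equivariance, and where you perform the final substitution $o \mapsto o \cdot g$), and your remark that freeness is not needed here is accurate.
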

\begin{proof}
    For the direction $(i) \Rightarrow (ii)$,  pick an arbitrary $o \in \Oc$. Then note that
    \begin{align*}
        ( K (g\cdot y) )_o
        = \sum_{o' \in \Oc} K_{o,o'} (g \cdot y)_{o'}
        &= \sum_{o' \in \Oc} K_{o \cdot g^{-1}, o' \cdot g^{-1}} y_{o' \cdot g^{-1}} \\
        &= \sum_{o' \in \Oc} K_{o \cdot g^{-1}, o'} y_{o'}
        = ( K y)_{o \cdot g^{-1}} = (g \cdot K y)_o.
    \end{align*} 
    It follows that $K (g \cdot y) = g \cdot K y$, and hence $f$ must be equivariant. For the direction $(ii) \Rightarrow (i)$, pick $o,o' \in \Oc$ and $g \in \Gc$ arbitrarily. Suppose that $e_{o'} \in \C^\Oc$ denotes the canoncial unit vector for index $o'$, and note that $g \cdot e_{o'} = e_{o' \cdot g}$. Using equivariance of $f$,
    $$
        K_{o,o'}  
        = (K e_{o'})_o 
        = (g^{-1} \cdot K (g \cdot e_{o'}))_{o}
        = (K e_{o' \cdot g})_{o \cdot g}
        = K_{o\cdot g, o' \cdot g}.
    $$
    It follows that $K$ is an equivariant matrix.
\end{proof}

\begin{example}[Circulant matrices; heat equation]
    To give an instructive example, we consider the one-dimensional heat equation with periodic boundary conditions. After a spatial discretization via finite differences, we obtain a linear equation of the form 
    $$
        \dot y \propto \begin{bmatrix}
            2 & -1 & 0 & 0 & \dots & 0 & -1 \\
            -1 & 2 & -1 & 0 & \dots & 0 & 0 \\
            0 & -1 & 2 & -1 & \dots & 0 & 0 \\
            \vdots & \vdots & \vdots & \vdots & \ddots & \vdots & \vdots \\
            \vdots & \vdots & \vdots & \vdots & \ddots & \vdots & \vdots \\
            -1 & 0 & 0 & 0 & \dots & -1 & 2 \\
        \end{bmatrix} y.
    $$
    This structure is typical for equivariant systems. Indeed, systems that are equivariant with respect to a cyclic group can be described by matrices whose rows are shifted versions of each other -- in that case, they are called \emph{circulant} matrices.
    More formally, one sets $\Oc \coloneqq \Gc \coloneqq \Z_n$ and defines a group action of $\Gc$ on $\Oc$ (that is, itself) simply by applying the group operation. One then may verify that the above-described matrix is indeed equivariant, or simply consider \Cref{ex:block-circulant}.
\end{example}

\begin{example}[Block-circulant matrices]
    \label{ex:block-circulant}
    Suppose $K \in \R^{nm \times nm}$ is a block-circulant matrix,
    \begin{align*}
        K = \begin{bmatrix}
            C_0 & C_{n-1} & \dots & C_1 \\
            C_1 & C_{0} & \dots & C_2 \\
            \vdots & \vdots & \ddots & \vdots \\
            C_{n-1} & C_{n-2} & \dots & C_0 
        \end{bmatrix}
    \end{align*}
    where $C_i \in \R^{m \times m}$, i.e., $K$ consists of blocks of size $m \times m$. Intuitively, the number of orbits $m$ corresponds to the size of each block, while the number of blocks $n$ corresponds to the size of the group.
    To show equivariance of $K$ with respect to $\Gc = (\Z_n,+)$, we first identify $\Oc =  \{1,\dots,m\} \times \Gc$ with $\{1, \dots, nm\}$ using the bijection $\alpha: (i,j) \mapsto j m + i$. A free group action is defined on $\Oc$ by $(i,j) \cdot g = (i, j + g)$, and on $\{1,\dots,nm\}$ by $\alpha(i,j)\cdot g = \alpha((i,j)\cdot g)$. Indeed, one has for $(i,j),(i',j') \in \Oc$,
    $$
        K_{\alpha(i,j), \alpha(i',j')} 
        = K_{jm+i,j'm+i'}
        = (C_{j-j'})_{i,i'}
        = (C_{(j+g)-(j'+g)})_{i,i'}
        = K_{\alpha(i,j)\cdot g, \alpha(i',j')\cdot g}.
    $$
    For the orbit representatives, one can, for example, pick $\Sc = \{ (1,0), \dots, (m,0) \}$. 
    
\end{example}

\begin{figure}[ht]
    \centering
    \scalebox{0.75}{
        \begin{tikzpicture}
            \node[circle,fill=black,inner sep=3mm] (a) at (-0.3,0) {};
            \node[circle,fill=black,inner sep=3mm] (b) at (3.7,0) {};
            \node[circle,fill=black,inner sep=3mm] (c) at (7.7,0) {};
            \node[circle,fill=black,inner sep=3mm] (d) at (11.7,0) {};
            
            \node at (-2,-0.5) {$k$};
            \node at ( 2,-0.5) {$k$};
            \node at ( 6,-0.5) {$k$};
            \node at (10,-0.5) {$k$};
            \node at (14,-0.5) {$k$};

            \draw[<->] (-0.7, 0.8) -- (0.1, 0.8) node[midway,above] {$p_1$};
            \draw[<->] ( 3.3, 0.8) -- (4.1, 0.8) node[midway,above] {$p_2$};
            \draw[<->] ( 7.3, 0.8) -- (8.1, 0.8) node[midway,above] {$p_3$};
            \draw[<->] (11.3, 0.8) -- (12.1, 0.8) node[midway,above] {$p_4$};
            
            \draw[decoration={aspect=0.9, segment length=16, amplitude=2mm,coil},decorate] (-4,0) -- (0,0); 
            \draw[decoration={aspect=0.9, segment length=16, amplitude=2mm,coil},decorate] (0,0) -- (4,0); 
            \draw[decoration={aspect=0.9, segment length=16, amplitude=2mm,coil},decorate] (4,0) -- (8,0); 
            \draw[decoration={aspect=0.9, segment length=16, amplitude=2mm,coil},decorate] (8,0) -- (12,0); 
            \draw[decoration={aspect=0.9, segment length=16, amplitude=2mm,coil},decorate] (12,0) -- (16,0); 
            
            \fill [fill = white] (-4.5,-1) rectangle (-3.8,1);
            \fill [pattern = north east lines] (-4.5,-1) rectangle (-3.8,1);
            \draw[thick] (-3.8,-1) -- (-3.8,1);
            
            \fill [fill = white] (15.5,-1) rectangle (16.2,1);
            \fill [pattern = north east lines] (15.5,-1) rectangle (16.2,1);
            \draw[thick] (15.5,-1) -- (15.5,1);
        \end{tikzpicture}
    }
    \caption{A spring system of four nodes connected in a row with equal stiffness coefficients. This system is equivariant with respect to reflections along the center.}
    \label{fig:spring-system}
\end{figure}

\begin{example}[Symmetric spring system\footnote{Code available online, \url{https://github.com/graps1/convolutional-edmd}.}]
    \label{ex:spring-system}
    Consider the spring system from \Cref{fig:spring-system}, which consists of four nodes that deviate from their resting positions by $p_1(t),p_2(t),p_3(t),p_4(t) \in \R$, $p(t) = [p_1(t), p_2(t), p_3(t),p_4(t)]\in \R^4$. All springs have the same fixed stiffness coefficient $k$ and the nodes have mass $m$. The force acting on the $i$-th node is given by $F_i(p)$, where
    \begin{align*}
         &F_1(p) = - 2 k p_1 + k p_2, \quad
        &&F_4(p) = - 2 k p_4 + k p_3, \\
         &F_2(p) = - 2 k p_2 + k p_1 + k p_3, \quad 
        &&F_3(p) = - 2 k p_3 + k p_2 + k p_4.
    \end{align*}
    From Newton's second law, we have $\ddot p(t) = F(p(t)) / m$. By moving to position-velocity variables $y(t) = (p(t), \dot p(t)) \in \Yc \coloneqq \R^8$, we %
    may consider the first-order differential equation $\dot y = K y$, where
    $$
        \begin{bmatrix}
            \dot p_1 \\
            \dot p_2 \\
            \dot p_3 \\
            \dot p_4 \\
            \ddot p_1 \\
            \ddot p_2 \\
            \ddot p_3 \\
            \ddot p_4 
        \end{bmatrix} = \begin{bNiceMatrix}[columns-width=auto]
            0 & 0 & 0 & 0 & 1 & 0 & 0 & 0 \\
            0 & 0 & 0 & 0 & 0 & 1 & 0 & 0 \\
            0 & 0 & 0 & 0 & 0 & 0 & 1 & 0 \\
            0 & 0 & 0 & 0 & 0 & 0 & 0 & 1 \\
            \nicefrac{-2k}{m} & \nicefrac k m & 0 & 0 & 0 & 0 & 0 & 0 \\
            \nicefrac k m & \nicefrac{-2k}{m} & \nicefrac k m & 0 & 0 & 0 & 0 & 0 \\
            0 & \nicefrac k m & \nicefrac{-2k}{m} & \nicefrac k m & 0 & 0 & 0 & 0 \\
            0 & 0 & \nicefrac k m & \nicefrac{-2k}{m} & 0 & 0 & 0 & 0
        \end{bNiceMatrix} 
        \begin{bmatrix}
            p_1 \\
            p_2 \\
            p_3 \\
            p_4 \\
            \dot p_1 \\
            \dot p_2 \\
            \dot p_3 \\
            \dot p_4 
        \end{bmatrix}
    $$
    This system is equivariant with respect to reflections along the center, that is, when swapping nodes $1$ and $2$ with nodes $4$ and $3$. More formally, the group $\Gc = \{e,a\}, aa = e$, acts on $\Yc$ via
    $$
        a \cdot \begin{bmatrix} 
            p_1 & p_2& p_3& p_4& \dot p_1& \dot p_2& \dot p_3& \dot p_4
        \end{bmatrix}^T = 
        \begin{bmatrix} 
            p_4& p_3& p_2&p_1&\dot p_4& \dot p_3&\dot p_2& \dot p_1
        \end{bmatrix}^T,
    $$
    with $e$ doing nothing. 
    That is, $a$ just reflects the $p$'s and $\dot p$'s along the center. 
    One can then verify that the system satisfies $g \cdot K y = K (g \cdot y)$ for all $g \in \Gc$ and $y \in \Yc$ (this can be checked by simply computing $a \cdot Ky$ and $K (a \cdot y)$ separately). Additionally, mind that this action simply permutes vector indices, and hence $K$ is also an equivariant matrix by means of \Cref{thr:equivariant-matrices}. The corresponding action of $\Gc$ on $\Oc = \{1,\dots,8\}$ would then be given by $e$ doing nothing and
    \begin{equation}
    \begin{split}
        1 \cdot a = 4, \quad 2 \cdot a = 3, \quad 3 \cdot a = 2, \quad 4 \cdot a = 1, \\
        5 \cdot a = 8, \quad 6 \cdot a = 7, \quad 7 \cdot a = 6, \quad 8 \cdot a = 5.
        \label{eq:spring-example-group-action}
    \end{split}
    \end{equation}
\end{example}

\paragraph{Convolution kernels}

The \emph{convolution kernel} $A : \Gc \rightarrow \C^{\Sc \times \Sc}$ that corresponds to an equivariant matrix $K$ is defined by
\begin{equation}
    A_{s,s'}(g) \coloneqq K_{s,s'\cdot g^{-1}} .
    \label{eq:corresponding-convolution-kernel}
\end{equation}
By assumption, $\Gc$ acts freely on $\Oc$, and thus $\Oc$ and $\Sc \times \Gc$ are in one-to-one correspondence via the bijection $(s,g) \mapsto s \cdot g$.
Therefore, the equivariant matrix that corresponds to a kernel $A$ is given by 
\begin{equation}
    \label{eq:corresponding-equivariant-matrix}
    K_{s \cdot g, s' \cdot h} \coloneqq A_{s,s'}(g h^{-1}).
\end{equation}
\Cref{eq:corresponding-equivariant-matrix} defines the inverse of \eqref{eq:corresponding-convolution-kernel} if one sees these equations as defining functions between the set of equivariant matrices and the set of convolution kernels.

The \emph{convolution} of a kernel $A : \Gc \rightarrow \C^{\Sc \times \Sc}$ with $u : \Gc \rightarrow \C^{\Sc}$ is defined by
\begin{equation}
   A * u : \Gc \rightarrow \C^\Sc, \quad g \mapsto \sum_{h \in \Gc} A(h) u(h^{-1} g). 
   \label{eq:definition-convolution}
\end{equation}
By the identification of $\Oc$ with $\Sc \times \Gc$, we can define the convolution of $A$ with a \emph{vector} $u \in \C^\Oc$ by viewing $u$ as a function $u : \Gc \rightarrow \C^\Sc$ defined by $u_s(g) = u_{s \cdot g}$ (see \Cref{rem:orbits-and-free-actions}). In the same sense, one can also view $A * u$ as a vector in $\C^\Oc$. %
If $K$ is $A$'s corresponding equivariant matrix, one always has
\begin{equation}
    \label{eq:matrix-multiplication-equals-convolution}
    K u = A * u.   
\end{equation}
Hence, if one makes use of the generalized Fourier transform, one may gain a computational advantage when it comes to computing matrix-vector products.

\begin{example}[Symmetric spring system: Convolutions]
    \label{ex:spring-system-convolutions}
    From \Cref{ex:spring-system}, recall that the state can be %
     viewed as an element of $\R^\Oc$, where $\Oc = \{1,\dots,8\}$. We have defined a group action of $\Gc = \{e,a\}$ on $\Oc$ via \eqref{eq:spring-example-group-action}. The corresponding set of orbits is given by
    $$
        \{ 1, 4 \}, \{ 2, 3 \}, \{5, 8\}, \{6,7 \}, 
    $$
    and we may pick the orbit representatives $\Sc = \{1,2,5,6\}$. 
    The identification of $y \in \R^\Oc$ with $y : \Gc \rightarrow \R^\Sc$ according to \Cref{rem:orbits-and-free-actions} is given by
    \begin{align*}
        \begin{NiceArray}{c|cccccccc}
              & p_1 & p_2 & p_3 & p_4 & \dot p_1 & \dot p_2 & \dot p_3 & \dot p_4 \\ 
             \hline
             y \in \R^O & y_1 & y_2 & y_3 & y_4 &y_5 & y_6 & y_7 & y_8 \\ 
             y : \Gc \rightarrow \R^S & y_1(e) & y_2(e) & y_2(a) & y_1(a) &y_5(e) & y_6(e) & y_6(a) & y_5(a)
        \end{NiceArray}
    \end{align*}
    Finally, let us consider the convolution kernel for the matrix $K$. To simplify notation, we sort $\Sc$ in ascending order, and view $y(e),y(a) \in \R^\Sc$ as vectors in $\R^{|\Sc|}$, 
    \begin{align*}
       y(e) = \begin{bmatrix} p_1& p_2& \dot p_1& \dot p_2 \end{bmatrix}^T,
       \quad 
       y(a) = \begin{bmatrix} p_4& p_3& \dot p_4& \dot p_3 \end{bmatrix}^T.
    \end{align*}
    From \cref{eq:matrix-multiplication-equals-convolution} together with \eqref{eq:definition-convolution}, we can derive
    \begin{align}
        \begin{bmatrix} 
            \dot y(e)  \\
            \dot y(a)
        \end{bmatrix}
        = \begin{bmatrix}
            (A * y)(e) \\
            (A * y)(a)
        \end{bmatrix} 
        = \begin{bmatrix}
            A(e) & A(a) \\
            A(a) & A(e) 
        \end{bmatrix}
        \begin{bmatrix}
            y(e) \\
            y(a)
        \end{bmatrix}.
        \label{eq:spring-example-block-convolutional-form}
    \end{align}
    Here, we view $A(e),A(a) \in \R^{\Sc \times \Sc}$ as matrices in $\R^{|\Sc|\times |\Sc|}$. From \cref{eq:corresponding-convolution-kernel} and the entries of $K$ in \Cref{ex:spring-system}, we can derive their values by inspection:
    \begin{align*}
        A(e) = \begin{bNiceMatrix}[columns-width=auto]
            0 & 0 & 1 & 0 \\
            0 & 0 & 0 & 1 \\
            \nicefrac{-2k}{m} & \nicefrac k m & 0 & 0 \\
            \nicefrac k m & \nicefrac{-2k}{m} & 0 & 0     
        \end{bNiceMatrix},
        \quad 
        A(a) = \begin{bNiceMatrix}[columns-width=auto]
            0 & 0 & 0 & 0 \\
            0 & 0 & 0 & 0 \\
            0 & 0 & 0 & 0 \\
            0 & \nicefrac k m & 0 & 0 
        \end{bNiceMatrix}.
    \end{align*}
    Inserting these into \eqref{eq:spring-example-block-convolutional-form}, we obtain the following system of equations:
    \begin{align*}
        \begin{bmatrix}
            \dot p_1 \\
            \dot p_2 \\
            \ddot p_1 \\
            \ddot p_2 \\
            \dot p_4 \\
            \dot p_3 \\
            \ddot p_4 \\
            \ddot p_3
        \end{bmatrix} 
        = 
        \begin{bNiceArray}[columns-width=auto]{cccc|cccc}
            0 & 0 & 1 & 0 & 0 & 0 & 0 & 0 \\
            0 & 0 & 0 & 1 & 0 & 0 & 0 & 0 \\
            \nicefrac{-2k}{m} & \nicefrac k m & 0 & 0 & 0 & 0 & 0 & 0 \\
            \nicefrac k m & \nicefrac{-2k}{m} & 0 & 0 & 0 & \nicefrac k m & 0 & 0 \\
            \hline
            0 & 0 & 0 & 0 & 0 & 0 & 1 & 0 \\
            0 & 0 & 0 & 0 & 0 & 0 & 0 & 1 \\
            0 & 0 & 0 & 0             & \nicefrac{-2k}{m} & \nicefrac k m & 0 & 0 \\
            0 & \nicefrac k m & 0 & 0 & \nicefrac k m & \nicefrac{-2k}{m} & 0 & 0 \\
        \end{bNiceArray}
        \begin{bmatrix}
            p_1 \\
            p_2 \\
            \dot p_1 \\
            \dot p_2 \\
            p_4 \\
            p_3 \\
            \dot p_4 \\
            \dot p_3
        \end{bmatrix} 
    \end{align*}
    Notice that %
    this yields
    the same solution as the original system. However, by re-ordering the variables, we have %
    achieved
    a block-circulant format (compare \cref{eq:spring-example-block-convolutional-form} with \Cref{ex:block-circulant} for the case $n=2$), which is due to the isomorphism of $\Gc$ with $(\Z_2,+)$. 
    Therefore, instead of solving the equation $\dot y = K y$ by explicit construction of $K$, one can compute convolutions using $A$, if it is known. Notice also that we do not need to store the full $8 \times 8$ matrix $K$ (ignoring its sparsity), but can rather store two $4 \times 4$ matrices, namely $A(e)$ and $A(a)$. 
\end{example}

In the next section, we show how the Fourier transform of the convolution kernel can be used to speed up linear operations.

\subsection{The generalized Fourier transform}\label{sec:convolutions-and-fourier-transform}

Before introducing the generalized Fourier transform, we require the notion of a \emph{group representation}. Essentially, a representation maps every group element to a matrix that describes the action of this element on points in space. In case of abelian groups, this notion simplifies significantly, but we are also interested in the more general case.

\paragraph{Group representations}
A \emph{representation} of a finite group $\Gc$ is a function $\rho : \Gc \rightarrow \C^{d_\rho \times d_\rho}$, where $d_\rho \in \N$ is the \emph{degree} of $\rho$, such that $\rho(g)$ is invertible for all $g\in \Gc$ and compatibility with the underlying group is ensured: for $h, g \in \Gc$, one has $\rho(h)\rho(g) = \rho(h g)$. 
Two representations $\rho,\sigma$ with $d_\rho = d_\sigma = d$ are \emph{equivalent} if there is an invertible $T \in \C^{d \times d}$ such that $\rho(g) = T^{-1} \sigma(g) T$ for all $g \in \Gc$.
A representation $\rho$ is called \emph{reducible} if there is an equivalent representation $\sigma$ such that all $\{ \sigma(g) : g \in \Gc\}$ have the same block-diagonal structure.
Finally, there exists a finite set of \emph{irreducible} representations that are pairwise inequivalent. This set is denoted by $\Fourier{\Gc}$, and it satisfies $\sum_{\rho \in \Fourier{\Gc}} d_\rho^2 = |\Gc|$.

\begin{example}[Cyclic group]
    \label{ex:cyclic-group-representation}
    The cyclic group $\Z_3$ has three irreducible representations: $\rho_0,\rho_1,\rho_2 : \Gc \rightarrow \C$, defined by $\rho_k(g) = \exp(-2 \pi \mathrm i g k / 3)$ for $k = 0,1,2$.
\end{example}

\begin{example}[Permutation group]
    The permutation group of three elements can be defined by the set of bijective functions from $\{1,2,3\}$ to itself together with function composition. A representation in $\R^3$ is given for $\sigma : \{1,2,3\} \rightarrow \{1,2,3\}$ by $\rho(\sigma) \in \R^{3 \times 3}$, with $\rho(\sigma)_{ij} = 1$ if $i = \sigma(j)$ and $0$ otherwise. That is, by the set of permutation matrices in $\R^3$.
\end{example}

\paragraph{The generalized Fourier transform}
For a representation $\rho$, the generalized Fourier transform is defined on $u : \Gc \rightarrow \C$ by
\begin{equation}
    \Fourier{ u }(\rho) = \sum_{g \in \Gc} u(g) \rho(g) \in \C^{d_\rho \times d_\rho}.
    \label{eq:generalized-fourier-transform}
\end{equation}
That is, $\Fourier{u}$ takes a representation $\rho$ and maps it to a complex \emph{matrix} of size $d_\rho \times d_\rho$. The inverse of the generalized Fourier transfom is given by 
\begin{equation}
    u(g) = \frac{1}{|\Gc|} \sum_{\rho \in \Fourier{\Gc}} d_\rho \mathrm{Tr}(\rho(g^{-1}) \Fourier{u}(\rho)).
    \label{eq:generalized-inv-fourier-transform}
\end{equation}
This is equivalent to the standard Fourier transform for the case of cyclic groups (see the part about abelian groups at the end of this section).

\begin{remark}
    For abelian groups, fast algorithms for the Fourier transform are available, which need $O(|\Gc| \log |\Gc|)$ operations. For more general groups, fast algorithms are at least conjectured, see \cite{maslen2001cooley}, and already known for a few groups. Either way, it is always possible to compute the GFT in a naive way in $O(|\Gc| \sum_{\rho \in \Fourier{\Gc}} d_\rho^2 ) = O(|\Gc|^2)$.
\end{remark}

The \emph{convolution theorem} connecting Fourier transformations and convolutions can be generalized too; for $u, v:  \Gc\rightarrow \C$, one has
$
    \Fourier{u * v}(\rho) = \Fourier{u}(\rho) \Fourier{v}(\rho)
$
(see, for example, \cite{terras1999fourier}, Theorem 4, p.\ 261).\footnote{Here, the convolution of $u$ with $v$ is defined as in \eqref{eq:definition-convolution}, resulting in a function $u * v : \Gc \rightarrow \C$.}
For the general case of matrices and vectors, let $A : \Gc \rightarrow \C^{\Sc \times \Sc}$ and $u : \Gc\rightarrow \C^\Sc$ and introduce:
\begin{align*}
    &\Fourier{A}(\rho) \in \C^{(\Sc \times \Dc_\rho) \times (\Sc \times \Dc_\rho)},
    \quad &&\Fourier{A}_{(s,i),(s',j)}(\rho) \coloneqq (\Fourier{A_{s,s'}})_{i,j}(\rho), \\
    &\Fourier{u}(\rho) \in \C^{(\Sc \times \Dc_\rho) \times \Dc_\rho}, 
    \quad &&\Fourier{u}_{(s',j), k}(\rho) \coloneqq (\Fourier{u_{s'}})_{j,k}(\rho),
\end{align*}
where $\Dc_\rho = \{1,\dots,d_\rho\}$. 
Intuitively, this way we treat the $\Sc \times \Sc$ resp. $\Sc$ like batch dimensions by defining the Fourier transform individually for every $A_{s,s'}$ and $u_s$. 
Here, 
$\Fourier{A}(\rho)$ and $\Fourier{u}(\rho)$ can also be seen as matrices with shape $|\Sc|d_\rho \times |\Sc|d_\rho$ and $|\Sc| d_\rho \times d_\rho$. 
This is advantageous because we can now extend the convolution theorem to matrix multiplication:
\begin{equation}
    \Fourier{A * u}(\rho) = \Fourier{A}(\rho) \Fourier{u}(\rho). \label{eq:convolution-theorem}
\end{equation}

\paragraph{Eigenvalues and eigenvectors}
As discussed by \cite{ahlander_eigenvalues_2006}, the Fourier transform closely connects the eigenvalues of group convolutions with the eigenvalues of $\Fourier{A}(\rho)$, where $\rho$ ranges over the irreducible representations:

\begin{proposition}[Åhlander and Munthe-Kaas, 2006, Proposition 2] %
    \label{thr:eigenvalues}
    Let $A : \Gc \rightarrow \C^{\Sc \times \Sc}$ be a convolution kernel. Then,
    \begin{enumerate}[noitemsep]
        \item If $\lambda$ is an eigenvalue of $u \mapsto A * u$, then there is $\rho \in \Fourier{\Gc}$ s.t.\ $\lambda$ is an eigenvalue of $\Fourier{A}(\rho)$.
        \item If $\lambda$ is an eigenvalue of $\Fourier{A}(\rho)$ for a $\rho \in \Fourier{\Gc}$, then $\lambda$ is a $d_\rho$-fold %
        eigenvalue of $u \mapsto A * u$.
    \end{enumerate}
\end{proposition}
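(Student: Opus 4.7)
The plan rests on the convolution theorem \eqref{eq:convolution-theorem}, which block-diagonalises the convolution operator $T : u \mapsto A * u$ in Fourier space, together with invertibility of the generalised Fourier transform through \eqref{eq:generalized-inv-fourier-transform}. Concretely, passing to Fourier coefficients turns $T$ into the family of linear maps $\Fourier{u}(\rho) \mapsto \Fourier{A}(\rho)\Fourier{u}(\rho)$ indexed by $\rho \in \Fourier{\Gc}$, so the eigenstructure of $T$ can be read off from the blocks $\Fourier{A}(\rho)$.

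For (i), suppose $u \neq 0$ satisfies $A * u = \lambda u$. Applying \eqref{eq:convolution-theorem} yields $\Fourier{A}(\rho)\Fourier{u}(\rho) = \lambda \Fourier{u}(\rho)$ for every $\rho \in \Fourier{\Gc}$. Since $u$ can be recovered from $\{\Fourier{u}(\rho)\}_{\rho \in \Fourier{\Gc}}$ via \eqref{eq:generalized-inv-fourier-transform}, the condition $u \neq 0$ forces some $\Fourier{u}(\rho) \in \C^{|\Sc|d_\rho \times d_\rho}$ to be nonzero as a matrix. Any nonzero column of that matrix is then an eigenvector of $\Fourier{A}(\rho)$ for $\lambda$.

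For (ii), fix $\rho \in \Fourier{\Gc}$ and $v \in \C^{|\Sc|d_\rho}$ with $\Fourier{A}(\rho)\, v = \lambda v$. For each $k \in \{1,\dots,d_\rho\}$ I would define $u^{(k)} : \Gc \to \C^{\Sc}$ implicitly through its Fourier coefficients, setting
\[
    \Fourier{u^{(k)}}(\rho) \coloneqq v\, e_k^{\top} \in \C^{|\Sc|d_\rho \times d_\rho}, \qquad \Fourier{u^{(k)}}(\sigma) \coloneqq 0 \ \text{ for } \sigma \neq \rho,
\]
where $e_k \in \C^{d_\rho}$ is the $k$-th standard basis vector, and reconstruct $u^{(k)}$ via \eqref{eq:generalized-inv-fourier-transform}. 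By construction, $\Fourier{A}(\sigma)\Fourier{u^{(k)}}(\sigma) = \lambda\Fourier{u^{(k)}}(\sigma)$ for every $\sigma \in \Fourier{\Gc}$, and \eqref{eq:convolution-theorem} together with uniqueness of the inverse Fourier transform then gives $A * u^{(k)} = \lambda u^{(k)}$. Since the rank-one matrices $v\, e_1^{\top},\dots,v\, e_{d_\rho}^{\top}$ are linearly independent (their nonzero entries sit in disjoint columns) and the Fourier transform is a linear bijection, the functions $u^{(1)},\dots,u^{(d_\rho)}$ are linearly independent as well, so $\lambda$ is an eigenvalue of $T$ of geometric multiplicity at least $d_\rho$.

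The main obstacle is purely book-keeping: one has to handle the block structure of $\Fourier{A}(\rho) \in \C^{|\Sc|d_\rho \times |\Sc|d_\rho}$ and $\Fourier{u}(\rho) \in \C^{|\Sc|d_\rho \times d_\rho}$ consistently with \eqref{eq:convolution-theorem}, and to recognise that it is precisely the $d_\rho$ columns of $\Fourier{u}(\rho)$ which carry the $d_\rho$-fold multiplicity. Once this identification is made, both claims follow directly from the convolution theorem and Fourier inversion, with no further analytic or representation-theoretic input required.
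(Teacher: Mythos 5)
Your proof is correct and follows essentially the same route the paper sketches after the proposition (which it cites from \AA hlander and Munthe-Kaas): block-diagonalisation via the convolution theorem \eqref{eq:convolution-theorem}, Fourier inversion, and the zero-padded rank-one construction $\Fourier{u^{(k)}}(\rho)=v\,e_k^{\top}$ that yields the $d_\rho$ linearly independent eigenvectors. The only reading to note is that ``$d_\rho$-fold'' means multiplicity at least $d_\rho$, which is exactly what your argument establishes.
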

Besides eigenvalues, \cite{ahlander_eigenvalues_2006} also describe how one can obtain the corresponding eigenvectors, that is, those $v : \Gc \rightarrow \C^\Sc$ that satisfy $A * v = \lambda v$ for some eigenvalue $\lambda$. 
Since $v \mapsto A * v$ is a linear operation on vectors of size $|\Sc||\Gc|$, there are equally many eigenvectors, and one can index them by $(\sigma,i,j)$, where $\sigma \in \Fourier{\Gc}$, $i = 1,\dots,|\Sc|d_\sigma$ and $j = 1,\dots, d_\sigma$ (this follows from $\sum_{\sigma \in \Fourier{\Gc}} d_\sigma^2 = |\Gc|$).
For $(\sigma,i,j)$ fixed, let $(\lambda,\xi)$ be the $i$-th eigenpair of the Fourier-transformed kernel at $\sigma$,
$$
    \Fourier{A}(\sigma) \xi = \lambda \xi.
$$ 
Define $v$ in Fourier space by a zero-padded version of $\xi$, i.e., $\Fourier{v}(\rho) \in \C^{|\Sc| d_\rho \times d_\rho}$ with:
$$
    \Fourier{v}_{\bullet,\ell}(\rho) = \begin{cases}
        \xi & \text{if } \ell = j \text{ and } \rho = \sigma, \\
        0 & \text{else}
    \end{cases}
$$
for all $\rho \in \Fourier{\Gc}$ and $\ell = 1,\dots,d_\rho$.
Note that 
$
    \Fourier{A}(\rho) \Fourier{v}(\rho) = \lambda \Fourier{v}(\rho)
$
holds for all $\rho \in \Fourier{\Gc}$. Using the convolution theorem \eqref{eq:convolution-theorem} and the inverse Fourier transform, one can then see that $(\lambda,v)$ is an eigenpair of $u \mapsto A * u$. By ranging over all $(\sigma,i,j)$, this procedure allows one to construct a full set of eigenvectors.

\begin{remark}[Computational costs]
    To compute a full set of eigenvalues, one therefore needs to solve one eigenvalue problem of size $|\Sc|d_\sigma$ for every $\sigma \in \Fourier{\Gc}$, plus the effort for computing the generalized Fourier transform of $A$. Ignoring the Fourier transform, this results in a complexity of $O(|\Sc|^3 \sum_{\sigma \in \Fourier{\Gc}} d_\sigma^3)$ (if one assumes $O(n^3)$ for an eigenvalue decomposition, where $n$ is the matrix size). In contrast, if one would explicitly construct the equivariant matrix corresponding to $A$, this would require $O(|\Gc|^3|\Sc|^3)$ operations for an eigenvalue decomposition. In case of abelian groups, one has $|\Fourier{\Gc}| = |\Gc|$ and $d_\rho = 1$, which would simplify the complexity in the group-convolutional case to $O(|\Gc||\Sc|^3)$.
\end{remark}

\begin{example}[Symmetric spring system: Fourier transform]
    Let us compute the Fourier-transformed convolution kernel and its eigenvalues from \Cref{ex:spring-system,ex:spring-system-convolutions}. Recall from \Cref{ex:spring-system-convolutions} that $\Gc$ is isomorphic to $\Z_2$. As in \Cref{ex:cyclic-group-representation}, we obtain a set of irreducible representations $\Fourier{\Gc} = \{ \rho_0, \rho_1 \}$ with $d_{\rho_0} = d_{\rho_1} = 1$, defined by
    \begin{align*}
        \rho_0(g) = \begin{cases}
            1 & \text{if } g = e, \\
            1 & \text{if } g = a
        \end{cases}
        \quad\text{and}\quad
        \rho_1(g) = \begin{cases}
            1  &  \text{if } g = e, \\
            -1 & \text{if } g = a.
        \end{cases}
    \end{align*}
    Inserting this into \eqref{eq:generalized-fourier-transform}, the Fourier transform is simply
    \begin{align*}
        \Fourier{u}(\rho) = \begin{cases}
            u(e) + u(a) & \text{if } \rho = \rho_0, \\
            u(e) - u(a) & \text{if } \rho = \rho_1.
        \end{cases}
    \end{align*}
    Hence,
    \begin{align*}
        \Fourier{A}(\rho_0) =  
        \begin{bNiceMatrix}[columns-width=auto]
            0 & 0 & 1 & 0 \\ 
            0 & 0 & 0 & 1 \\ 
            \nicefrac {-2k} m & \nicefrac k m & 0 & 0 \\ 
            \nicefrac k m & \nicefrac {-k} m & 0 & 0 
        \end{bNiceMatrix},\quad
        \Fourier{A}(\rho_1) = 
        \begin{bNiceMatrix}[columns-width=auto]
            0 & 0 & 1 & 0 \\ 
            0 & 0 & 0 & 1 \\ 
            \nicefrac {-2 k} m & \nicefrac k m & 0 & 0 \\ 
            \nicefrac k m & \nicefrac {-3 k} m & 0 & 0 
        \end{bNiceMatrix}.
    \end{align*}
    where we view $\Fourier{A}(\rho_0)$ and $\Fourier{A}(\rho_1)$ as $|\Sc| \times |\Sc|$ matrices. Their eigenvalues are
    \begin{align*}
        &\textstyle
        \Fourier{A}(\rho_0): \quad - i \sqrt{\nicefrac k {2m}} \sqrt{3 + \sqrt{5}},\ 
        + i \sqrt{\nicefrac k {2m}} \sqrt{3 + \sqrt{5}},\ 
        - \sqrt{\nicefrac k {2m}} \sqrt{-3 + \sqrt{5}},\ 
        + \sqrt{\nicefrac k {2m}} \sqrt{-3 + \sqrt{5}}, \\
        &\textstyle
        \Fourier{A}(\rho_1): \quad - i \sqrt{\nicefrac k {2m}} \sqrt{5 + \sqrt{5}},\ 
        + i \sqrt{\nicefrac k {2m}} \sqrt{5 + \sqrt{5}},\ 
        - \sqrt{\nicefrac k {2m}} \sqrt{-5 + \sqrt{5}},\ 
        + \sqrt{\nicefrac k {2m}} \sqrt{-5 + \sqrt{5}},
    \end{align*}
    which match the ones of $K$.
\end{example}

\paragraph{Left eigenpairs}
If $K$ is $A$'s corresponding equivariant matrix, then every eigenpair of $u \mapsto A * u$ is a \emph{right} eigenpair of $K$. \emph{Left} eigenpairs of $K$ correspond to right eigenpairs of $K^T$, and hence we can compute them by introducing a ``transposed'' kernel
$$
    A^T : \Gc \rightarrow \C^{\Sc \times \Sc}, \quad (A^T)_{s,s'}(g) \coloneqq A_{s',s}(g^{-1}),
$$
that computes the matrix-vector product $K^T u = A^T * u$. Therefore, one can construct \emph{left} eigenpairs of $K$ by computing eigenpairs of $u \mapsto A^T * u$ in the above-described way. (This is in particular necessary when it comes to approximating eigenfunctions of the Koopman operator.)
\\
\paragraph{Plancherel's identity} 
This result relates the norm of $u : \Gc \rightarrow \C^\Sc$ with a ``weighted'' norm of its Fourier transform.  Recall that $\Fourier{u}(\rho)$ can be seen as a $|\Sc|d_\rho \times d_\rho$ matrix. From this perspective, the two norms considered here are %
$$
    \lVert u \rVert^2 \coloneqq \sum_{s \in \Sc} \lVert u_s \rVert^2_2
    \quad\text{and}\quad 
    \lVert \Fourier{u} \rVert^2 \coloneqq \frac{1}{|\Gc|} \sum_{\rho \in \Fourier{\Gc}} d_\rho \lVert \Fourier{u}(\rho) \rVert_F^2,
$$
where $\lVert \cdot \rVert_F$ is the Frobenius norm. In particular, if $u : \Gc \rightarrow \C$, Plancherel's identity (see, for example, \cite[Theorem 2, p. 258, exercise on p. 263]{terras1999fourier}) states that 
$
    \lVert u \rVert^2 = \lVert \Fourier{u} \rVert^2.
$
Even further, one can see that it also holds for the case $u : \Gc \rightarrow \C^\Sc$:
$$
    \lVert u \rVert^2 
    = \sum_{s \in \Sc} \frac 1 {|\Gc|} \sum_{\rho \in \Fourier{\Gc}} d_\rho \lVert \Fourier{u_s}(\rho) \rVert^2_F 
    = \frac 1 {|\Gc|} \sum_{\rho \in \Fourier{\Gc}} d_\rho \sum_{s \in \Sc} \lVert \Fourier{u_s}(\rho) \rVert^2_F 
    = \frac 1 {|\Gc|} \sum_{\rho \in \Fourier{\Gc}} d_\rho \lVert \Fourier{u}(\rho) \rVert^2_F = \lVert \Fourier{u} \rVert^2.
$$
We state this result independently:
\begin{lemma}[Plancherel's identity]
    For $u : \Gc \rightarrow \C^\Sc$, one has $\lVert u \rVert^2 = \lVert \Fourier{u} \rVert^2$.
    \label{thr:plancherel}
\end{lemma}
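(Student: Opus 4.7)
The plan is to reduce the vector-valued statement to the scalar case, which follows from the standard Plancherel identity already cited from \cite{terras1999fourier}. The key structural observation is that, by construction, the Fourier transform $\Fourier{u}(\rho) \in \C^{(\Sc \times \Dc_\rho) \times \Dc_\rho}$ is essentially just a stacking, along the $\Sc$-axis, of the scalar Fourier transforms $\Fourier{u_s}(\rho) \in \C^{\Dc_\rho \times \Dc_\rho}$ of the component functions $u_s : \Gc \rightarrow \C$.

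First I would unfold the left-hand side using the definition $\lVert u \rVert^2 = \sum_{s \in \Sc} \lVert u_s \rVert_2^2$, where each $u_s$ is viewed as a function $\Gc \rightarrow \C$ whose squared $\ell^2$-norm is $\sum_{g \in \Gc} |u_s(g)|^2$. For each fixed $s \in \Sc$, the scalar Plancherel identity yields
\[
    \lVert u_s \rVert_2^2 = \frac{1}{|\Gc|} \sum_{\rho \in \Fourier{\Gc}} d_\rho \lVert \Fourier{u_s}(\rho) \rVert_F^2.
\]
Summing this over $s$ and exchanging the order of summation (a finite sum, so no subtlety) gives
\[
    \lVert u \rVert^2 = \frac{1}{|\Gc|} \sum_{\rho \in \Fourier{\Gc}} d_\rho \sum_{s \in \Sc} \lVert \Fourier{u_s}(\rho) \rVert_F^2.
\]

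Next I would identify the inner sum with $\lVert \Fourier{u}(\rho) \rVert_F^2$. By the definition $\Fourier{u}_{(s,j),k}(\rho) = (\Fourier{u_s})_{j,k}(\rho)$, the Frobenius norm of $\Fourier{u}(\rho)$ is
\[
    \lVert \Fourier{u}(\rho) \rVert_F^2 = \sum_{s \in \Sc} \sum_{j,k = 1}^{d_\rho} \bigl|(\Fourier{u_s})_{j,k}(\rho)\bigr|^2 = \sum_{s \in \Sc} \lVert \Fourier{u_s}(\rho) \rVert_F^2,
\]
which is precisely what is needed. Substituting this back yields $\lVert u \rVert^2 = \lVert \Fourier{u} \rVert^2$.

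There is no real obstacle here; the proof is a direct bookkeeping exercise that only uses the scalar Plancherel identity plus the definitional fact that the matrix $\Fourier{u}(\rho)$ is assembled from the scalar transforms of its components. The only thing to be careful about is that both definitions of $\lVert \cdot \rVert^2$ (in state-space and in Fourier-space) treat the $\Sc$-dimension as a "batch" dimension in the same way, so that the reduction to the scalar case is clean.
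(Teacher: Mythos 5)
Your proof is correct and follows essentially the same route as the paper: apply the scalar Plancherel identity componentwise to each $u_s$, interchange the finite sums over $s$ and $\rho$, and identify $\sum_{s \in \Sc} \lVert \Fourier{u_s}(\rho) \rVert_F^2$ with $\lVert \Fourier{u}(\rho) \rVert_F^2$ via the definition of the stacked transform. No gaps.
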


\paragraph{Abelian groups}
If $\Gc$ is abelian, then the situation simplifies significantly. In this case, $\Gc$ can be expressed as the direct sum of cyclic groups, $\Gc = \Z_{p_1} \oplus \dots \oplus \Z_{p_m}$. %
Assuming that we can decompose $\Gc$ this way, %
the order of every irreducible representation $\rho \in \Fourier{\Gc}$ is exactly $d_\rho = 1$, and moreover, we can write $\Fourier{\Gc} = \{ \rho_g : g \in \Gc \}$ with
$$
    \rho_g(h) = e^{-2\pi \mathrm i \sum_{n=1}^m g_n h_n / p_n } \in \C.
$$
The Fourier transform simplifies to:
$$
    \Fourier{u}(\rho_g) = \sum_{h \in \Gc} u(h) e^{-2 \pi \mathrm i \sum_{n=1}^m g_n h_n / p_n},
$$
which is essentially the multi-dimensional discrete Fourier transform. For the case where the transformed object is a matrix or vector-valued function $A : \Gc \rightarrow \C^{\Sc \times \Sc}$ or $u : \Gc \rightarrow \C^{\Sc}$, $\Fourier{A}(\rho)$ is a matrix in $\C^{\Sc \times \Sc}$ and $\Fourier{u}(\rho)$ a vector in $\C^\Sc$. 
Note that \Cref{thr:eigenvalues} also simplifies in the sense that the multiplicity of the eigenvalues vanishes.

\section{Group-convolutional EDMD: Theory and Application} \label{sec:equivariance-edmd}

In this section, we show how equivariant structures can be exploited in EDMD when the observables are appropriately defined. We build on the previously established results from \Cref{sec:groups,sec:equivariant-matrices,sec:convolutions-and-fourier-transform} to show a number of results: The first result concerns the ``optimality'' of convolutions (\Cref{thr:convolution}), that is, we show that there is indeed an optimal equivariant EDMD matrix under certain conditions. Next, we show how one can construct appropriate observables, how one can learn the convolution kernel, and how one can approximate eigenfunctions of the Koopman operator.

Recall from \Cref{sec:preliminaries} that $\Phi$ defines a flow on a state space $\Yc$, stemming, for instance, from the discretization of an ODE or PDE in time. Additionally, $\Yc$ comes with a probability measure $P$ that allows us to draw samples from it. To define the EDMD loss, we additionally require that the observables are square-integrable and indexed by a finite set $\Oc$, i.e., $\psi : \Yc \rightarrow \C^\Oc$ with $\psi_o, \Koopman \psi_o \in L^2_P(\Yc)$ for all $o \in \Oc$. Finally, recall that $\Gc$ denotes a finite group.

Our assumptions are then:

\begin{enumerate}[noitemsep, label=\normalfont{(\textbf{A\arabic*})}]
    \item $\Gc$ acts on $\Yc$ from the left, and it acts freely on $\Oc$ from the right.
    \label{ass:group-action}
    \item $\psi_o(g \cdot y) = \psi_{o \cdot g}(y)$ for $g \in \Gc, o \in \Oc$ and $y \in \Yc$.
    \label{ass:equivariance-observable}
    \item $\Phi(g \cdot y) = g \cdot \Phi(y)$ for $g \in \Gc$ and $y \in \Yc$. \label{ass:equivariance-dynamics}
    \item $\bm y \sim P$ implies $g \cdot \bm y \sim P$ for $g\in \Gc$. (Equivalently: $P(\Ac) = P(g \cdot \Ac)$ for $g \in \Gc, \Ac \in \Sigma$.) %
    \label{ass:invariance-density}
\end{enumerate}
Assumption \ref{ass:group-action} is very basic; it ensures that the group can act on both the system state and the observables. Intuitively, we need a group action on the system state to even consider group-based methods, and we need a group action on the index set of the observables so that the equivariance of the underlying system carries over to the level of observables. The next assumptions \ref{ass:equivariance-observable} and \ref{ass:equivariance-dynamics} are in the same spirit; the second ensures that the group operation can shift from state to observation level and the third assumes left equivariance of the system dynamics.\footnote{Note that assumption \ref{ass:equivariance-dynamics} already implies equivariance of the Koopman operator: One can define a group action on observables of the form $f : \Yc \rightarrow \C$ by $(g \cdot f)(y) \coloneqq f(g^{-1} \cdot y)$. The Koopman operator then satisfies
$
    \Koopman (g \cdot f) = g \cdot (\Koopman f)
$
(see Theorem III.1 in \cite{salovaKoopmanOperatorIts2019}).} Finally, \ref{ass:invariance-density} ensures that the underlying distribution is invariant with respect to group actions. That is, acting on states does not change the probability.

Our main result is now as follows:

\begin{theorem}
    \label{thr:convolution}
    Under \ref{ass:group-action}, \ref{ass:equivariance-observable}, \ref{ass:equivariance-dynamics} and \ref{ass:invariance-density}, %
    there is an equivariant EDMD matrix.%
\end{theorem}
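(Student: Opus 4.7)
The plan is a Reynolds-averaging (symmetrization) argument. First I would translate the hypotheses into linear-algebraic statements on $\C^\Oc$. As in \Cref{thr:equivariant-matrices}, define the left action of $\Gc$ on $\C^\Oc$ via the permutation matrices $T_g$ with $(T_g v)_o = v_{o \cdot g^{-1}}$; note that $T_g T_h = T_{gh}$ and $T_g^{-1} = T_{g^{-1}} = T_g^\ast$, so the $T_g$ are unitary. Using \ref{ass:equivariance-observable}, a direct computation shows $\psi(g \cdot y) = T_{g^{-1}} \psi(y)$, and combining this with \ref{ass:equivariance-dynamics} yields $\Koopman \psi(g \cdot y) = \psi(\Phi(g \cdot y)) = \psi(g \cdot \Phi(y)) = T_{g^{-1}} \Koopman \psi(y)$.

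Second, I would show that the EDMD loss
\[
    L(K) \coloneqq \mathbb E\big[\lVert K \psi(\bm y) - \Koopman \psi(\bm y) \rVert^2\big]
\]
satisfies $L(K) = L(T_g K T_{g^{-1}})$ for every $g \in \Gc$. By \ref{ass:invariance-density}, $\bm y$ and $g \cdot \bm y$ have the same distribution, so
\[
    L(K) = \mathbb E\big[\lVert K \psi(g\cdot \bm y) - \Koopman \psi(g\cdot \bm y) \rVert^2\big] = \mathbb E\big[\lVert K T_{g^{-1}} \psi(\bm y) - T_{g^{-1}} \Koopman \psi(\bm y) \rVert^2\big],
\]
and pulling out the unitary $T_{g^{-1}}$ from inside the norm gives $L(T_g K T_{g^{-1}})$.

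Third, given any minimizer $K^\ast$ of $L$ (which exists since $L$ is a continuous, coercive quadratic on a finite-dimensional space), I would form the symmetrization
\[
    \bar K \coloneqq \frac{1}{|\Gc|} \sum_{g \in \Gc} T_g K^\ast T_{g^{-1}}.
\]
Convexity of $L$ together with the previous step yields $L(\bar K) \le \frac{1}{|\Gc|} \sum_g L(T_g K^\ast T_{g^{-1}}) = L(K^\ast)$, so $\bar K$ is also a minimizer. For equivariance, I would verify $T_h \bar K T_{h^{-1}} = \bar K$ for every $h \in \Gc$ by substituting $g \mapsto h^{-1} g$ in the sum and using $T_h T_g = T_{hg}$. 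The identity $T_h \bar K = \bar K T_h$ says that $v \mapsto \bar K v$ is left-equivariant, so by \Cref{thr:equivariant-matrices} the matrix $\bar K$ satisfies $\bar K_{o,o'} = \bar K_{o \cdot g, o' \cdot g}$ for all $g \in \Gc$ and $o,o' \in \Oc$.

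The only delicate point is bookkeeping the left/right conventions and the inverses that appear when converting the right action on $\Oc$ into a left action on $\C^\Oc$; once these are pinned down, everything else is standard averaging over the group and convexity of a quadratic objective.
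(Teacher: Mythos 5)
Your proposal is correct, but it proves \Cref{thr:convolution} by a genuinely different route than the paper. You use the classical Reynolds/group-averaging argument: represent the action by permutation matrices $T_g$, show the loss is invariant under conjugation, $L(K)=L(T_g K T_{g^{-1}})$, via \ref{ass:equivariance-observable}--\ref{ass:invariance-density} and unitarity of $T_g$, average a minimizer over the group, and invoke convexity plus \Cref{thr:equivariant-matrices}. The paper instead decomposes the loss over $\Sc\times\Gc$, selects the single best group translate $g_\star$ of a given $K$, and tiles that block equivariantly into a new matrix $K'$ with $\lossfull(K')\le\lossfull(K)$; it is an explicit index-level construction (the resulting equivariant minimizer is in general a different one than your average $\bar K$) and never needs convexity, whereas your argument is shorter, isolates the structural facts (loss invariance, convexity, linearity of the equivariance constraint), and generalizes immediately to any convex loss and, with Haar averaging, to compact groups. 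Two details to pin down, consistent with the bookkeeping caveat you flagged yourself: with the convention $(T_g v)_o = v_{o\cdot g^{-1}}$ for a right action on $\Oc$, the map $g\mapsto T_g$ is an anti-homomorphism, $T_g T_h = T_{hg}$, so your identity $T_gT_h=T_{gh}$ fails for nonabelian $\Gc$; the proof survives because you only use unitarity, $T_g^{-1}=T_{g^{-1}}$, and the fact that conjugation by $T_h$ permutes the summands of $\bar K$ (or simply set $(T_gv)_o=v_{o\cdot g}$, which is a genuine homomorphism and gives $\psi(g\cdot y)=T_g\psi(y)$). Also, $L$ need not be coercive (the covariance of $\psi(\bm y)$ may be singular); existence of a minimizer follows instead because a nonnegative convex quadratic on a finite-dimensional space attains its infimum---a fact the paper's proof also uses implicitly when it fixes a minimizer to start from.
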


Before going into the proof, note that the minimization problem \eqref{eq:edmd-loss} can have multiple solutions if no further restrictions are applied. However, under suitable conditions, the solution is unique (see, e.g., \cite[Lemma C.2]{PSB+24}). In that case, according to this theorem, the unique optimal EDMD matrix must be equivariant. 

\begin{proof}
    Introduce $\bm u = \psi(\bm y)$ and $\bm u^+ = \Koopman \psi(\bm y) $, and define 
    $
        \lossfull(K) = \mathbb E[\lVert K \bm u - \bm u^+ \rVert^2],
    $
    so that an EDMD matrix satisfies $K \in \argmin_{K \in \C^{\Oc\times \Oc}} \ell(K)$.
    It suffices to fix an arbitrary $K$ and to show that there is an equivariant matrix $K'$ such that $\lossfull(K) \geq \lossfull(K')$. 
    
    Let $\Sc \subseteq \Oc$ be a set of orbit representatives.
    Since $\Gc$ defines a free action on $\Oc$ by \ref{ass:group-action}, the mapping $(s,g) \mapsto s \cdot g$ constitutes a bijection between $\Sc \times \Gc$ and $\Oc$ (see also \Cref{rem:orbits-and-free-actions}). Hence,
    \begin{equation}
        \lossfull(K)
        = \sum_{g \in \Gc} \sum_{s \in \Sc} \mathbb E [( (K \bm u)_{s \cdot g} - \bm u^+_{s \cdot g} )^2].
        \label{eq:thr-convolution-optimality-lossfull}
    \end{equation}
    Select the group element that minimizes the inner sum:
    $$
        g_\star \in \argmin_{g \in \Gc} \sum_{s \in \Sc} \mathbb E [( (K \bm u)_{s \cdot g} - \bm u^+_{s \cdot g} )^2].
    $$
    We then define $K'$ by setting
    $$
        K'_{s \cdot g,s' \cdot h} \coloneqq K_{s \cdot g_\star, s' \cdot h g^{-1} g_\star},
    $$
    and note that it is indeed equivariant.
    Then, for arbitrary $g \in \Gc$, $s \in \Sc$ and $y \in \Yc$,
    \begin{align}
        (K' \psi(g \cdot y))_{s \cdot g_\star g^{-1}}
        &= \textstyle\sum_{(s',h) \in \Sc \times \Gc} 
            K'_{s \cdot g_\star g^{-1}, s' \cdot h} \psi_{s' \cdot h}(g \cdot y) 
            \nonumber \\
        &= \textstyle\sum_{(s',h) \in \Sc \times \Gc} 
            K'_{s \cdot g_\star, s' \cdot h g} \psi_{s' \cdot h g}(y) 
            \label{eq:thr-convolution-optimality-1} \\
        &= \textstyle\sum_{(s',h) \in \Sc \times \Gc} 
            K_{s \cdot g_\star, s' \cdot h g} \psi_{s' \cdot h g}(y) 
            \label{eq:thr-convolution-optimality-2} \\
        &= \textstyle\sum_{(s',h) \in \Sc \times \Gc} 
            K_{s \cdot g_\star, s' \cdot h} \psi_{s' \cdot h}(y) 
            \label{eq:thr-convolution-optimality-3} \\
        &= (K \psi(y))_{s \cdot g_\star}.
            \label{eq:thr-convolution-optimality-4}
    \end{align}
    \Cref{eq:thr-convolution-optimality-1} uses \ref{ass:equivariance-observable} and equivariance of $K'$. Then \eqref{eq:thr-convolution-optimality-2} follows by definition of $K'$. Finally, \eqref{eq:thr-convolution-optimality-3} holds by the substitution $h \mapsto h g^{-1}$.
    Hence,
    \begin{align}
        \lossfull(K) 
        &\geq 
            \textstyle\sum_{(s,g) \in \Sc \times \Gc} \mathbb E [((K \bm u )_{s \cdot g_\star} - \bm u^+_{s \cdot g_\star})^2] 
            \label{eq:thr-convolution-optimality-6}
            \\
        &=
            \textstyle\sum_{(s,g) \in \Sc \times \Gc} \mathbb E [((K'\psi(g \cdot \bm y))_{s \cdot (g_\star g^{-1})} - \psi_{s \cdot g_\star}(\Phi(\bm y)))^2] 
            \label{eq:thr-convolution-optimality-7}
            \\
        &=
            \textstyle\sum_{(s,g) \in \Sc \times \Gc} \mathbb E [((K' \psi(g \cdot \bm y))_{s \cdot (g_\star g^{-1})} - \psi_{s \cdot (g_\star g^{-1})}(\Phi(g \cdot \bm y)))^2] 
            \label{eq:thr-convolution-optimality-8}
            \\
        &=
            \textstyle\sum_{(s,g) \in \Sc \times \Gc} \mathbb E [((K'\bm u)_{s \cdot (g_\star g^{-1})} - \bm u^+_{s \cdot  (g_\star g^{-1})})^2 ]
            \label{eq:thr-convolution-optimality-9}
            \\
        &=
            \textstyle\sum_{(s,g) \in \Sc \times \Gc} \mathbb E [((K'\bm u)_{s \cdot g} - \bm u^+_{s \cdot  g})^2]
            = \lossfull(K').
            \nonumber
    \end{align}
    \Cref{eq:thr-convolution-optimality-6} is obtained when bounding the outer sum in \eqref{eq:thr-convolution-optimality-lossfull} from below by substituting $g \mapsto g_\star$. Then, \eqref{eq:thr-convolution-optimality-7} holds by \eqref{eq:thr-convolution-optimality-4} and definition of $\bm u^+$. Applying \ref{ass:equivariance-dynamics} and \ref{ass:equivariance-observable}, we get \eqref{eq:thr-convolution-optimality-8}. 
    Then we can apply the substitution $g \cdot \bm y \mapsto \bm y$ inside the expectation by \ref{ass:invariance-density}, and reintroduce $\bm u$ and $\bm u^+$. Finally, we reorder the summation using the substitution $g \mapsto g^{-1} g_\star$.
\end{proof}

\paragraph{Constructing observables}
In order to construct observables satisfying \ref{ass:group-action} and \ref{ass:equivariance-observable}, one can start with an arbitrary finite set $\Sc$ and a set of \emph{generating} observables $\chi :  \Yc \rightarrow \C^\Sc$ that are then expanded using the group. Letting $\Oc = \Sc \times \Gc$, we obtain a full set of observables by setting $\psi : \Yc \rightarrow \C^\Oc ,$ where $\psi_{(s,g)}( y ) = \chi_s( g \cdot y  )$. Indeed, $\Gc$ defines a free right-group action on $\Oc$ via $(s, h) \cdot g = (s, h g)$, and we see that it satisfies \ref{ass:equivariance-observable}:
$$
    \psi_{(s,h)}(g \cdot y) 
    = \chi_s( h \cdot (g \cdot y) ) 
    = \chi_s( (h g) \cdot y ) 
    = \psi_{(s, h g)}(y)
    = \psi_{(s, h) \cdot g}(y).
$$

This argument also works the other way around: We can obtain a set of generating observables given that $\psi$ satisfies \ref{ass:group-action} and \ref{ass:equivariance-observable}: In that case, since $\Gc$ acts freely on $\Oc$, there is a finite set of orbit-representatives $\Sc \subseteq \Oc$ such that $(s,g) \mapsto s \cdot g$ is a bijection between $\Sc \times \Gc$ and $\Oc$. Therefore, one can define a selection of generating observables by setting $\chi_s(y) = \psi_s(y)$, $s \in \Sc$, and recover $\psi_{s \cdot g}(y)$ using \ref{ass:equivariance-observable} and the identity $\psi_{s}(g \cdot y) = \chi_s(g \cdot y)$. (Constructing observables this way appears to be similar to that of \cite{salovaKoopmanOperatorIts2019}.)

\begin{example}
    \label{ex:generating-observable}
    Suppose $\Yc = \R^{\Z_n}$ is the set of functions from $\Z_n$ to $\R$. The group $\Gc = (\Z_n,+)$ can act on $\Yc$ by periodic shifts: For $g \in \Gc$ and $y \in \Yc$, define
    $$
        (g + y)_i \coloneqq y_{i-g},
    $$
    To get a generating observable, one may define $|\Sc| = 3$ and
    $$
        \chi(y) = \begin{bmatrix}
            y_0 & y_0^2 & y_0^3
        \end{bmatrix}.
    $$
    That is, $\chi(y)$ simply observes the state's value at the origin in various ways.
    The full set of observables $\psi : \Yc \rightarrow \C^\Oc$ is obtained when shifting the state periodically and then observing it at the origin by means of $\chi$. Practically, this has the same effect as observing the original state at different coordinates: Viewing $\psi(y)$ as a matrix in $\C^{|\Gc| \times |\Sc|}$:
    $$
        \psi(y) = \begin{bmatrix}
            \chi(y) \\ \chi( 1 + y ) \\ \chi( 2 + y ) \\ \vdots \\ \chi( (n-1) + y) 
        \end{bmatrix} 
        = \begin{bmatrix}
            y_0   & y_0^2 & y_0^3 \\
            y_{n-1} & y_{n-1}^2 & y_{n-1}^3 \\
            y_{n-2} & y_{n-2}^2 & y_{n-2}^3 \\
            \vdots & \vdots & \vdots \\
            y_{1} & y_1^2 & y_1^3
        \end{bmatrix}.
    $$
\end{example}

\paragraph{Training}
Instead of constructing a full equivariant EDMD matrix, we can compute its convolution kernel instead. That is, we can solve the problem
$$
    \min_{A : \Gc \rightarrow \C^{\Sc \times \Sc}} \mathbb E [ \lVert A * \bm u - \bm u^+ \rVert^2 ],
$$
where $\bm u$ and $\bm u^+$ are $\Gc \rightarrow \C^\Sc$-valued random variables with an arbitrary distribution. A convenient way is to do it in Fourier space. Using Plancherel's identity (\Cref{thr:plancherel}) and the convolution theorem \eqref{eq:convolution-theorem}, one has
$$
    \mathbb E [ \lVert A * \bm u - \bm u^+ \rVert^2 ] =
    \frac 1 {|\Gc|} \sum_{\rho \in \Fourier{\Gc}} d_\rho \mathbb E [ \lVert \Fourier{ A }(\rho) \Fourier{\bm u}(\rho) - \Fourier{\bm u^+}(\rho) \rVert^2_F ],
$$
where $\lVert \cdot \rVert_F$ is the Frobenius norm.
When minimizing the right-hand side, notice that the $\{ \Fourier{A}(\rho) : \rho \in \Fourier{\Gc} \}$ are independent. Hence, we can individually solve $|\Fourier{\Gc}|$ least-squares problems:
\begin{equation}
     \min_{\Fourier{A}(\rho) \in \C^{|\Sc| d_\rho \times |\Sc|d_\rho}} \mathbb E[ \lVert \Fourier{A}(\rho) \Fourier{\bm u}(\rho) - \Fourier{\bm u^+}(\rho) \rVert^2_F ],
     \label{eq:convolution-separated-optimization-problem}
\end{equation}
i.e., one for every representation. To recover $A$, one can apply the inverse Fourier transform.

\paragraph{Eigenfunctions}
One commonly approximates eigenfunctions of the Koopman operator by taking left eigenpairs $(\lambda , v)$ of the EDMD matrix, i.e., $v^T K  = \lambda v^T$, and introducing $f(y) = v^T \psi(y) = \sum_{o \in \Oc} v_o \psi_o(y)$ (see, e.g., \cite{WKR15}) to approximate an eigenfunction. Indeed, one has 
$$
    \Koopman f(y) = \Koopman (v^T \psi )(y) = v^T \Koopman \psi(y) \approx v^T K \psi(y) = \lambda v^T \psi(y) = \lambda f(y).
$$
If the EDMD matrix is equivariant, one can simply  compute eigenpairs of its ``transposed'' convolution kernel using the approach from \Cref{sec:convolutions-and-fourier-transform}.

\begin{table}[ht]
    \centering
    \begin{tabular}{c|cc}
        Parameter & Kuramoto--Sivashinsky & Spiraling waves \\
        \hline
        Time step & $1$ time unit & $0.2$ time units \\
        Spatial domain & $[0,3\pi] \times [0,3\pi]$ & $[0,40]\times[0,40]$ \\
        grid / group $\Gc$ & $\Z_{16} \times \Z_{16}$ & $\Z_{48} \times \Z_{48}$ \\
        $|\Sc|$ & 5 & 30 \\ 
        observable size $|\Gc||\Sc|$ & 1,280 & 69,120
    \end{tabular}
    \caption{Simulation settings.}
    \label{tab:simulation-details}
\end{table}

\subsection{Experiments: Partial-differential equations with periodic boundary conditions}\label{sec:experiments}

The goal of this section is to demonstrate the convolutional EDMD approach for the case of PDEs in two-dimensional periodic domains using the abelian group $\Gc = (\Z_n \times \Z_n,+)$ and the fast Fourier transform. We first show how to construct observables using the notion of generating observable, then demonstrate how to learn a convolution kernel in Fourier space, and finally show how to approximate eigenfunctions of the Koopman operator by computing eigenvectors of convolutions.\footnote{Code and detailed description available online, \url{https://github.com/graps1/convolutional-edmd}.}

Before going into details, let us remark on some methodical aspects. The error \eqref{eq:edmd-loss} is generally smaller if the EDMD matrix is left unconstrained, i.e., forcing it to be equivariant can only increase it. However, this is no longer true once we are concerned with finite-data approximations to the EDMD matrix. In this section, we will compare the convolutional with the full-scale EDMD approach---both refer to the case where \eqref{eq:edmd-loss} is approximated with finitely many samples.

There are two advantages that come with equivariant matrices: Firstly, the additional structure means that fewer parameters are learned, and thus, if the expectation in \eqref{eq:edmd-loss} is approximated using only few data points, an equivariant approach may compute better solutions. Secondly, if the group is large and if the Fourier transform can be computed efficiently, as is the case here, then convolutions may provide a computational advantage over using a full-scale EDMD matrix.

To test these claims, we conduct the following experiments:
\begin{itemize}
    \item To demonstrate that convolutional EDMD can learn even on sparse/incomplete data, we carry out one experiment in the ``low-data'' and one in the ``large-data'' regime using the Kuramoto--Sivashinsky equation 
    $$
        \dot y = - \Delta y - \Delta^2 y - \textstyle\frac 1 2  \lVert \nabla y \rVert^2.
    $$
    For each case, we learn both an equivariant EDMD matrix that is represented by a convolution kernel, and a full-scale EDMD matrix that is left unconstrained. 
    \item To show that the convolutional EDMD approach is applicable even in high-dimensional settings, we conduct an experiment on a $\lambda$-$\omega$ spiraling wave system (see \cite{aguareles2016asymptotic}, \cite{nathan2018applied}), parametrized as in \cite{walker2023visualpde},
    \begin{alignat*}{2}
        \dot u = &\quad\textstyle \frac 1 5 &\Delta u + 3 u + (v - u)(u^2 +v^2), \\
        \dot v = &&\Delta v + 3 v - (u+v)(u^2+ v^2) ,
    \end{alignat*}
    with $y = (u,v)$.
    The system has a limit cycle that depends on the initial conditions, see the top two rows in \Cref{fig:spiralwaves-comparison}. Since the number of observables is in the ten-thousands, using a full-scaler approach is infeasible. We show that the convolutional EDMD approach can still learn the system dynamics and can efficiently compute thousands of eigenvalues.
\end{itemize}

\paragraph{Setup}
Both demonstrations have the same baseline. Each PDE is solved and then discretized in both space and time, see \Cref{tab:simulation-details} for details. The spatial domain becomes a grid of shape $\Z_n \times \Z_n$, so that the states are of the form $y : \Z_n \times \Z_n \rightarrow \R^m, y \in \Yc$ ($m=1$ for Kuramoto--Sivashinsky and $m = 2$ for the spiraling waves equation). 
After temporal discretization, we obtain a dynamical system $\Phi : \Yc \rightarrow \Yc$. The translational equivariance of the PDE is inherited by this dynamical system, which together with the periodic boundary conditions yields equivariance of $\Phi$ in the sense that 
$$
    \Phi(g + y) = g + \Phi(y) \quad\text{for}\quad g \in \Gc = (\Z_n \times \Z_n, +), 
$$
where $g$'s action on $y$ is defined pointwise, i.e., $(g + y)(x) = y(x - g)$. In other words, shifting the states in space and propagating them yields the same result as shifting the propagated states.

\paragraph{Observables and reconstruction} Since $\Gc = \Z_n \times \Z_n$, one can view states $y : \Gc \rightarrow \R^m$ as $n \times n$ images with $m$ channels. In order to define observables matching the group structure, we employ a $1 \times 1$ convolutional neural network that expands $m$ to $|\Sc|$ channels. In fact, this architectural choice can also be derived using a generating observable of the form 
$$
    \chi : \Yc \rightarrow \R^\Sc, \quad \chi(y) = \sigma( A y(0)+b),
$$ 
where $A \in \R^{|\Sc|\times m}, b \in \R^{|\Sc|}$
are parameters and $\sigma$ is a pointwise nonlinearity. The convolutional network is then the observable $\psi : \Yc \rightarrow \R^\Oc$ that is ``generated'' by $\chi$ (see \Cref{sec:equivariance-edmd} and also \Cref{ex:generating-observable}). 
In all of our experiments, it was sufficient to initialize $A,b$ randomly and to use the softplus activation function for $\sigma$. For reconstruction, we use a $1 \times 1$ linear convolutional neural network mapping $\Gc \rightarrow \R^\Sc$ back to $\Gc \rightarrow \R^m$.

\paragraph{Training}
Following \Cref{sec:equivariance-edmd}, one can learn the optimal convolution kernel in Fourier space, which amounts to $|\Fourier{\Gc}| = |\Gc| = n^2$ small problems that need to be solved. 
Since $\Gc$ is abelian, the generalized Fourier transform is simply a two-dimensional discrete Fourier transform (see end of \Cref{sec:convolutions-and-fourier-transform}), with fast implementations widely available. Additionally, we solve the single large problem \eqref{eq:edmd-loss} for the full EDMD matrix for the case of the Kuramoto--Sivashinsky equation.

\paragraph{Eigenvalues and eigenfunctions}
Eigenvalues are computed using \Cref{thr:eigenvalues}, and eigenfunctions are approximated by computing eigenpairs of the ``transposed'' convolution. To demonstrate the correctness of the approximated eigenfunctions, one defines the \emph{squared relative residual} (SRR), see \cite{colbrook2023residual}:
\begin{equation}
    \mathrm{SRR}_{\mathrm{eig}}(\lambda, f) = 100\% \cdot \frac{\mathbb E_{\Ptest} [|\Koopman f(\bm y) - \lambda f(\bm y)|^2]}{\mathbb E_{\Ptest}[|f(\bm y)|^2]},
    \label{eq:error-eigenfunctions}
\end{equation}
where $f \in L^2_P(\Yc)$ and $\lambda \in \C$. %
We approximate the SRR by averaging over the test samples.

\begin{figure}[h!]
    \centering
    \includegraphics[width=0.45\linewidth]{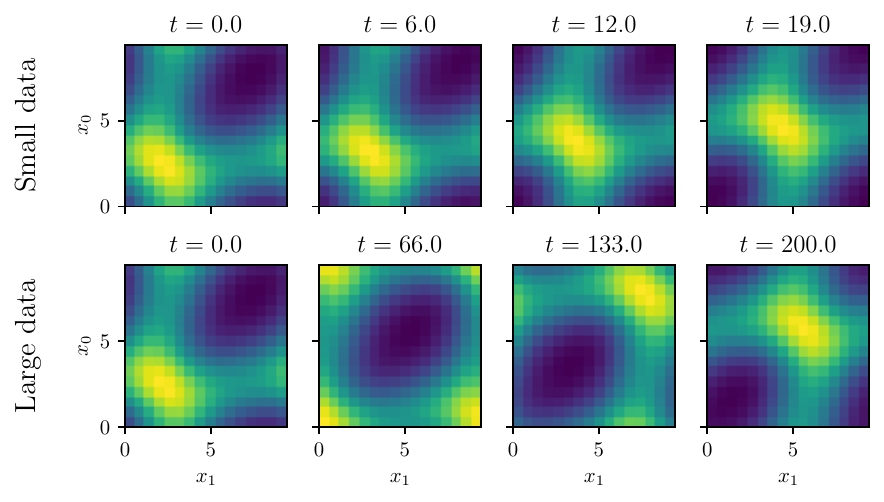}
    \includegraphics[width=0.45\linewidth]{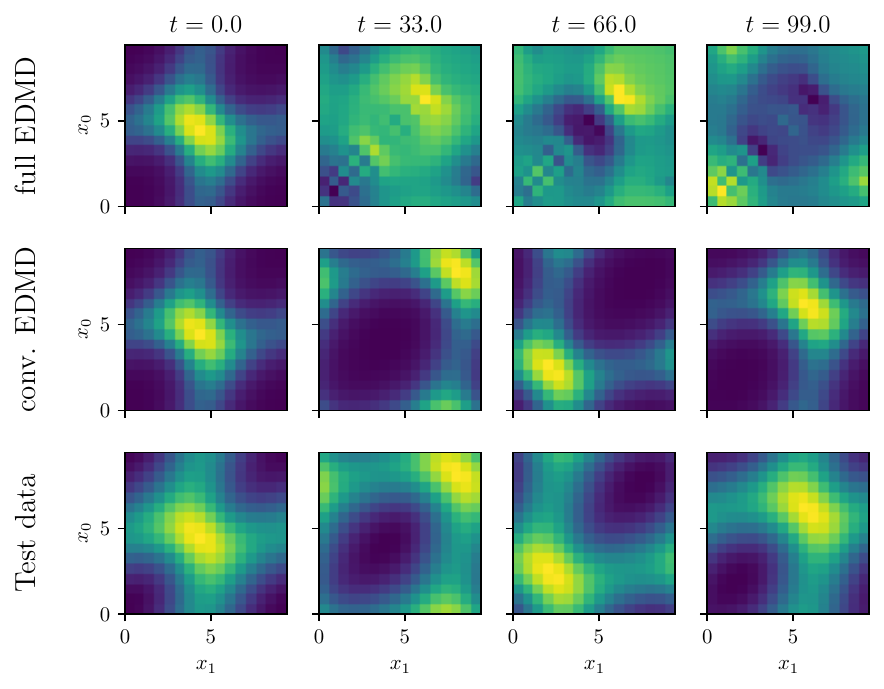}
    \caption{Left: Dataset used for the KS equation. The small dataset contains the system trajectory up until $t = 20$, the large dataset contains the system trajectory up until $t = 300$. Right: Comparison of the generated trajectories of the convolutional vs. full-scale EDMD approach on the test data.}
    \label{fig:ks2d-trajectories-comparison}
\end{figure}

\begin{figure}[h!]
    \centering
    \includegraphics[width=0.5\linewidth]{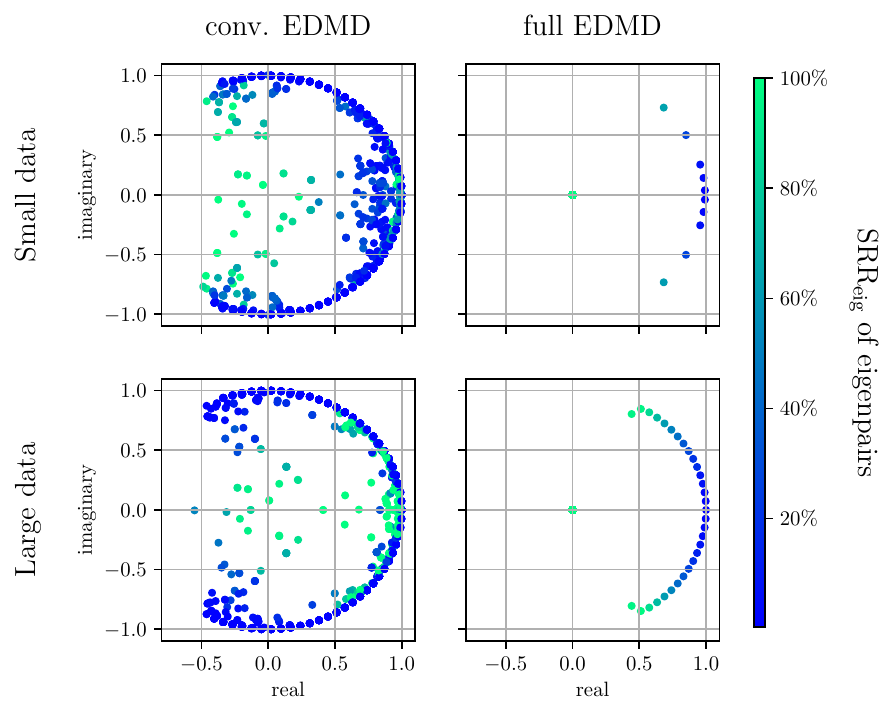}
    \includegraphics[width=0.3\linewidth]{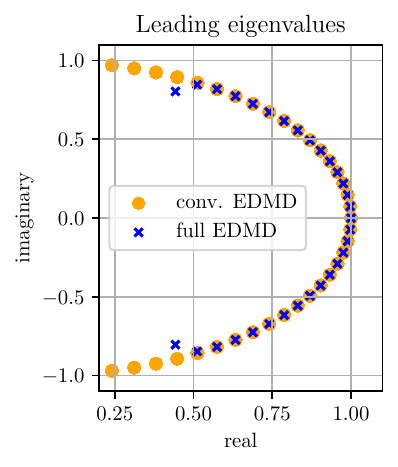}
    \caption{Left: Computed eigenvalues of both approaches on both datasets. Each eigenvalue is colored by $\mathrm{SRR}_{\mathrm{eig}}$, allowing us to evaluate the quality of the computed eigenpair (smaller is better). Right: comparison of the leading eigenvalues on the large dataset.}
    \label{fig:ks2d-eigenvalues}
\end{figure}

\subsubsection{Kuramoto--Sivashinsky equation: Performance on small vs. large datasets}

The dynamics of the Kuramoto--Sivashinsky equation are chaotic for larger domain sizes, but for our chosen initial condition and domain size, the solution tends to a wave that travels periodically from the lower left to the upper right corner (see \Cref{fig:ks2d-trajectories-comparison}). We simulate the system's trajectory until it settles onto its attractor, and record the trajectory for another 300 steps. For the low-data regime, we select only 20 input-output pairs from the training data, and for the large-data regime, we select all training samples. Another 200 separate samples make up the test data, which are used for evaluation.

In the \emph{low-data regime}, the convolutional approach is able to capture the system dynamics with small error, see the right side in \Cref{fig:ks2d-trajectories-comparison}, whereas the full-scale EDMD approach fails to learn. Remarkably, this dataset does not contain a complete cycle of the traveling wave. But due to the imposed structure, the convolutional model is still able to accurately describe the behavior of the traveling wave as it reaches the top right corner.
Furthermore, we can see that the convolutional approach captures most of the eigenvalue spectrum compared to the eigenvalues in the large-data regime, see \Cref{fig:ks2d-eigenvalues}. The leading eigenfunctions of the convolutional approach have a high quality in terms of the SRR and do not improve much with more data. 
In the \emph{large-data regime}, both approaches work well, but the convolutional EDMD approach has lower error, even though both performances are comparable to the convolutional approach in the low-data regime. (A figure such as the right one in \Cref{fig:ks2d-trajectories-comparison} for the large-data regime is left out for brevity since the error is not recognizable by eye.) Here, the leading eigenvalues of both approaches are in strong agreement.

\begin{figure}[h!]
    \centering
    \includegraphics[width=0.8\linewidth]{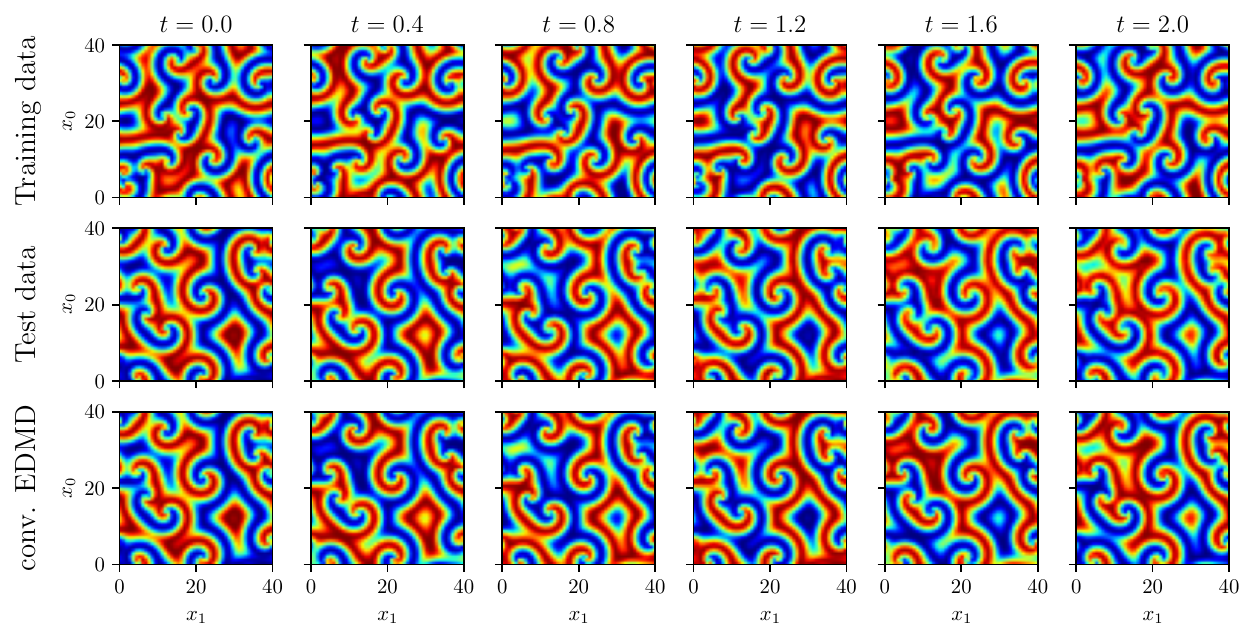}
    \caption{Top two rows: datasets generated by solving the spiraling waves equation starting from different initial conditions. Bottom row: trajectory generated by convolutional EDMD on the test data.}
    \label{fig:spiralwaves-comparison}
\end{figure}

\begin{figure}[h!]
    \centering
    \includegraphics[width=0.8\linewidth]{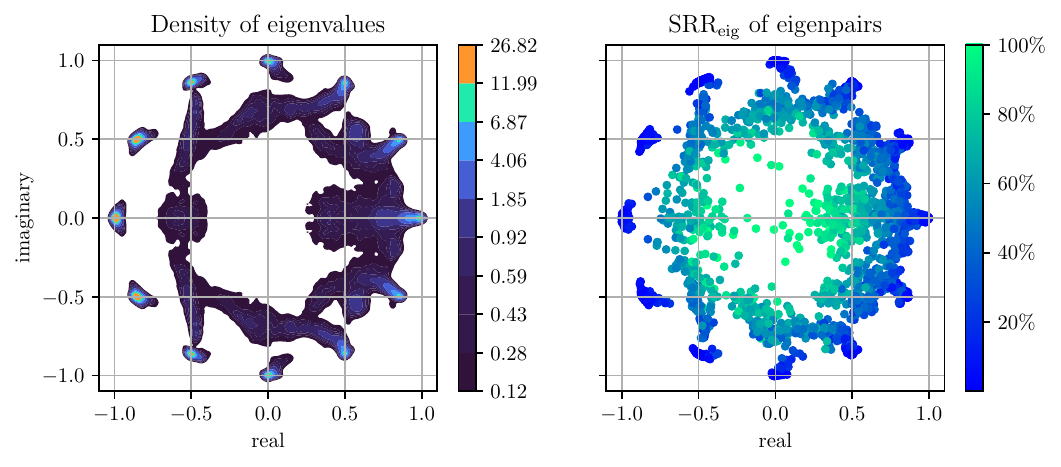}
    \caption{Left: Density plot of all eigenvalues (ca. 69k) computed by convolutional EDMD. Right: randomly sampled eigenvalues colored by $\mathrm{SRR}_{\mathrm{eig}}$.}
    \label{fig:spiralwaves-eigenvalues}
\end{figure}

\subsubsection{Spiraling waves equation: Performance on high-dimensional data}

For this equation, we have two datasets generated from different initial conditions, see \Cref{fig:spiralwaves-comparison}. Based on these initial conditions, the spiraling waves equation tends to a periodic limit cycle that features waves forming and collapsing. We train the convolutional EDMD approach on one dataset and evaluate it on the other, however, due to the large underlying group and number of used observables, a comparison with full-scale EDMD is not viable.  In \Cref{fig:spiralwaves-comparison}, we can see that the convolutional approach faithfully captures the system dynamics when run on initial conditions different from the ones it was being trained on. 

Furthermore, since the group and the number of observables are both relatively large, we are able to compute ca. 69k eigenvalues that cluster around equidistant points on the unit circle (see left side of \Cref{fig:spiralwaves-eigenvalues}). For these eigenvalues, we can use the corresponding eigenvectors to approximate eigenfunctions of the Koopman operator. As can be seen on the right in \Cref{fig:spiralwaves-eigenvalues}, especially eigenpairs with eigenvalues close to the unit circle tend to be accurate.

\section{Conclusion}

We have shown that enforcing group-equivariant structures in the EDMD matrix can be useful for a few reasons. For example, one needs less data since the number of free parameters is reduced, and one may improve computational speed by means of fast (generalized) Fourier transforms. %
In view of practical applications we have illustrated that a convolutional approach may provide an advantage in the low-data regime by conducting experiments on the two-dimensional Kuramoto--Sivashinsky equation, and we have shown that it is performant even in very high-dimensional settings using the sprialing waves equation. Indeed, as we have argued, using an equivariant structure is also well-justified in face of \Cref{thr:convolution}.
For future work, there are two main avenues that one can pursue: Firstly, one could investigate systems that are only ``semi''-equivariant, for example, equivariant PDEs that have Dirichlet or Neumann instead of periodic boundary conditions. Here, it is not yet clear whether methods using the Fourier transformation are applicable. Nevertheless, one can possibly enforce different structures that keep the number of parameters small and still allow for efficient computations (e.g., \cite{baddooPhysicsinformedDynamicMode2023}). Albeit not as efficient as a Fourier transform, one can then apply other methods that treat the underlying matrix only implicitly, such as the Arnoldi iteration.
For the second avenue, one could investigate finite-data error bounds: The improved convergence of a convolutional method (when varying the number of data points) over a full-EDMD approach is likely better in theory too, and not only in practice.

\section{Acknowledgments}

H.H.\ acknowledges financial support by the project ``SAIL: SustAInable Life-cycle of Intelligent Socio-Technical Systems'' (Grant ID NW21-059D), which is funded by the program ``Netzwerke 2021'' of the Ministry of Culture and Science of the State of Northrhine Westphalia, Germany. %
F.P.\ was funded by the Carl Zeiss Foundation within the project DeepTurb--Deep Learning in and from Turbulence. He was further supported by the free state of Thuringia and the German Federal Ministry of Education and Research (BMBF) within the project THInKI--Th\"uringer Hochschulinitiative für KI im Studium.
S.P.\ acknowledges support by European Union via the ERC Starting Grant ``KoOpeRaDE'' (ID 101161457).

\bibliographystyle{elsarticle-num-names} 
\bibliography{references}

\end{document}